\def\ps@pprintTitle{%
 \let\@oddhead\@empty
 \let\@evenhead\@empty
 \def\@oddfoot{\reset@font\hfil\thepage\hfil}
 \let\@evenfoot\@oddfoot
}
\newtheorem{theorem}{Theorem}[section]
\newtheorem{lemma}[theorem]{Lemma}
\newtheorem{corollary}[theorem]{Corollary}
\newtheorem{prop}[theorem]{Proposition}
\newtheorem*{con}{Conjecture}
\newtheorem{remark}[theorem]{Remark}
\theoremstyle{definition}
\newtheorem{definition}[theorem]{Definition}
\newtheorem*{th25}{Theorem 2.5}
\newtheorem*{th42}{Theorem 4.2}
\newtheorem*{th54}{Theorem 5.4}
\numberwithin{equation}{theorem}
\DeclareMathOperator{\Aut}{\operatorname{Aut}}
\DeclareMathOperator{\Hom}{\operatorname{Hom}}
\DeclareMathOperator{\Ker}{\mathit{ker}}
\DeclareMathOperator{\im}{\mathit{im}}
\DeclareMathOperator{\m}{\mathfrak{m}}
\DeclareMathSymbol{*}{\mathbin}{symbols}{"03}
\journal{}
\begin{document}

\begin{frontmatter}

\title{Non-inner automorphisms of order $p$ in finite $p$-groups of coclass $4$ and $5$}

\author[IISER TVM]{P. Komma\corref{cor1}}
\address[IISER TVM]{School of Mathematics, Indian Institute of Science Education and Research Thiruvananthapuram,\\695551
Kerala, India.}
\ead{patalik16@iisertvm.ac.in}
\cortext[cor1]{Corresponding author. \emph{Phone number}: +91 8606856562.}

\begin{abstract}
A long-standing conjecture asserts that every finite nonabelian $p$-group has a non-inner automorphism of order $p$. This paper proves the conjecture for finite $p$-groups of coclass $4$ and $5$ ($p\ge 5$). We also prove the conjecture for an odd order nonabelian $p$-group $G$ with cyclic center satisfying $C_G(G^p\gamma_3(G))\cap Z_3(G)\le Z(\Phi(G))$.
\end{abstract}

\begin{keyword}
Finite $p$-groups \sep Non-inner automorphisms \sep Coclass. 

 \MSC[2010]  20D15 \sep 20D45 

\end{keyword}

\end{frontmatter}

\section{Introduction}
Let $p$ be a prime number and let $G$ be a finite nonabelian $p$-group. By a celebrated theorem of Gasch\"{u}tz \cite{GW}, $G$ admits a non-inner automorphism of $p$-power order. In 1973, Berkovich \cite[Problem 4.3]{MK} proposed the following conjecture:

\begin{con}
Every finite nonabelian $p$-group admits a non-inner automorphism of order $p$.
\end{con}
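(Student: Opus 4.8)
The plan is to reduce the conjecture to a cohomological question about the Frattini subgroup and then to dispatch the residual case by a structural induction. The starting point is the observation, implicit in Gaschütz's theorem and sharpened by Deaconescu and Silberberg, that non-inner automorphisms of order $p$ fixing $\Phi(G)$ elementwise are controlled by the action of $G$ on $Z(\Phi(G))$: if $C_G(Z(\Phi(G))) \neq \Phi(G)$, one builds such an automorphism directly from an element of $Z(\Phi(G))$ that is moved by $G$. So the first step is to assume throughout that $C_G(Z(\Phi(G))) = \Phi(G)$, which already forces $Z(G) \le Z(\Phi(G)) \le \Phi(G)$ and rigidifies much of the internal geometry.

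Second, I would split on the structure of $Z(G)$. When $Z(G)$ is non-cyclic it contains two independent subgroups of order $p$, and one of these serves as the target of a homomorphism $G/\Phi(G) \to \Omega_1(Z(G))$ that is not induced by conjugation; the resulting order-$p$ automorphism is then manifestly non-inner. The delicate case is $Z(G)$ cyclic, and here I would aim to verify the hypothesis isolated in the abstract, $C_G(G^p\gamma_3(G)) \cap Z_3(G) \le Z(\Phi(G))$. Granting it, I would construct an order-$p$ automorphism $\alpha$ from a suitable element of $Z_3(G)$ that deliberately moves part of $\Phi(G)$; the force of the hypothesis is that any inner automorphism agreeing with $\alpha$ would have to be conjugation by an element of $C_G(G^p\gamma_3(G)) \cap Z_3(G)$, hence by an element of $Z(\Phi(G))$, and such conjugation fixes $\Phi(G)$ elementwise, contradicting that $\alpha$ moves $\Phi(G)$. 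Thus $\alpha$ is non-inner.

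Third, to reach the \emph{full} conjecture one must remove the two standing hypotheses, namely $C_G(Z(\Phi(G)))=\Phi(G)$ together with the cyclic-center condition. The natural device is an induction organised by coclass: the coclass theory of Leedham-Green and Shepherd shows that a $p$-group of coclass $r$ is, up to a bounded correction, an iterated central extension of groups of smaller coclass, so one hopes to lift a non-inner order-$p$ automorphism along such an extension whenever the governing $H^2$ obstruction vanishes. Combined with the base cases of small coclass this would, in principle, propagate the conclusion upward.

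Finally, the main obstacle. There is no known uniform reason for that $H^2$ obstruction to vanish, and the groups with $C_G(Z(\Phi(G)))=\Phi(G)$, cyclic center, and $C_G(G^p\gamma_3(G)) \cap Z_3(G) \not\le Z(\Phi(G))$ form exactly the stubborn family on which every general strategy has so far stalled. Controlling this family \emph{without} a bound on coclass or nilpotency class is the open heart of the conjecture, and I do not expect the induction above to close it in complete generality; the honest expectation is that the method settles the conjecture for each fixed coclass, as the present paper does for coclass $4$ and $5$, while the fully uniform statement awaits a genuinely new idea.
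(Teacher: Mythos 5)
The statement you were asked to prove is Berkovich's conjecture itself, and the paper does not prove it: it is stated as an open conjecture, and the paper establishes only special cases (coclass $4$ and $5$ for $p\ge 5$, and odd-order groups with cyclic center satisfying $C_G(G^p\gamma_3(G))\cap Z_3(G)\le Z(\Phi(G))$). Your proposal, to its credit, concedes this in its final paragraph, and that concession is accurate: what you have written is a strategy sketch, not a proof, and no argument along these lines is known to close the general case. So the verdict is a genuine gap, one you yourself identified.

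Beyond the honest concession, two intermediate steps are wrong or unjustified as stated. First, in the non-cyclic center case you claim that a homomorphism $G/\Phi(G)\to\Omega_1(Z(G))$ yields a ``manifestly non-inner'' automorphism. It does not: conjugation by elements of $Z_2(G)$ induces exactly such central maps, so the resulting order-$p$ automorphism can perfectly well be inner. Whether a non-inner one exists is a counting question comparing $Z^1(G/\Phi(G),\Omega_1(Z(G)))$ against the principal derivations; this is the content of the criteria $d\big(Z_2(G)/Z(G)\big)\ne d\big(Z(G)\big)d(G)$ and $d\big(\Omega_1(Z_2(G))\big)<d\big(Z(G)\big)d(G)$ collected in the paper's Lemma 3.5, and non-cyclic center alone does not settle it — the paper needs the coclass-type bound $|Z_i(G)|\le p^{i+4}$ in Theorem 5.1(i) to dispatch that case. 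Second, in the cyclic-center case you propose to \emph{verify} the hypothesis $C_G(G^p\gamma_3(G))\cap Z_3(G)\le Z(\Phi(G))$; this cannot be done, because the hypothesis fails precisely on the stubborn family: the paper's Theorem 4.2(i) shows that if all order-$p$ automorphisms fixing $G^p\gamma_3(G)$ elementwise are inner, then $C_G(G^p\gamma_3(G))\cap Z_3(G)\not\le Z(\Phi(G))$, so the condition is a sufficient hypothesis one may only assume, not derive. (Your description of the mechanism under that hypothesis — an order-$p$ automorphism fixing $G^p\gamma_3(G)$ but moving $\Phi(G)$, whose innerness would force the conjugating element into $Z(\Phi(G))$, a contradiction — does match the paper's actual argument via Theorem 2.5 and the derivation of Lemma 2.2.) Finally, your proposed coclass induction with an $H^2$-lifting step is not what the paper does and has no known foundation: the paper's treatment of coclass $4$ and $5$ is a direct derivation count, using $|Z_i(G)|\le p^{i+r-1}$ and lower bounds on kernels of explicit commutator-defined homomorphisms (the maps $\tau$, $\tau_1$, $\tau_2$, $\tau_3$) against presentations of small quotients — the extra-special group of order $p^3$ and the maximal-class group of order $p^4$ — and at no point lifts an automorphism along a central extension.
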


This is a simple to state and notoriously hard problem in group theory. The validity of the conjecture for regular $p$-groups follows from a cohomological result of Schmid \cite{PS80} and \cite{DS}. Deaconescu and Silberberg \cite{DS} proved that a finite nonabelian $p$-group $G$ satisfying the condition $C_G(Z(\Phi(G)))\neq \Phi(G)$ has a non-inner automorphism of order $p$ leaving $\Phi(G)$ elementwise fixed. Liebeck \cite{LH} proved that odd order $p$-groups of class $2$ admits a non-inner automorphism of order $p$ leaving $\Phi(G)$ elementwise fixed. Abdollahi \cite{A07,A10} proved the conjecture for $2$-groups of class $2$, $p$-groups for which $G/Z(G)$ is powerful, and $p$-groups of maximal class. Abdollahi, Ghoraishi, and Wilkens \cite{AGW13} proved the conjecture for finite $p$-groups of class $3$, and Abdollahi et al. \cite{AGMG14} proved the conjecture for $p$-groups of coclass $2$. Ruscitti et al. \cite{MLM} proved the conjecture for $p$-groups of coclass $3$ with the exception of $p=3$. Ghoraishi \cite{SG14, SG13} proved the conjecture for groups not satisfying the condition $Z_2^*(G)\le C_G(Z_2^*(G))=\Phi(G)$, and for an odd order $p$-group $G$ for which $(G, Z(G))$ is a Camina pair. Abdollahi and Ghoraishi \cite{AG17} proved the conjecture for $2$-generator finite $p$-groups with abelian Frattini subgroup. Jamali and Viseh \cite{JV13} proved the conjecture for finite $p$-groups with cyclic commutator subgroup. Shabani-Attar \cite{S13} proved the conjectutre for $p$-groups of order $p^m$ and exponent $p^{m-2}$. 

The objectives of this paper are twofold. Firstly we obtain results of independent interest (See Section $2$). In doing this, we look for criteria for the existence of a derivation $\delta: G\rightarrow A$ such that $\delta(\gamma_2(G))\neq 1$ when $G$ is an extra-special group of exponent $p$. Moreover, we prove a structure theorem for finite $p$-groups. 

\begin{th25}
Let $p$ be an odd prime and let $G$ be a finite $p$-group. If $G$ is not powerful, then a normal subgroup $N$ of $G$ exists such that either $G/N$ is an extra-special group of exponent $p$ or $G/N=U\times V$ where $U\le Z(G/N)$ is elementary abelian and $V$ is an extra-special group of exponent $p$.
\end{th25}

Other than giving the structure of a finite $p$-group, Theorem \ref{th:2.5} allows us to construct derivations on every finite $p$-group when the above-indicated criteria are satisfied.

The second objective of this paper is to prove the conjecture for the classes of finite $p$-groups given in the abstract. Section $2$ provides tools for constructing automorphisms of order $p$ in finite $p$-groups with a cyclic center, and the results below are consequences.

\begin{th42}
Let $p$ be an odd prime and let $G$ be a finite nonabelian $p$-group with cyclic center. Suppose that all the automorphisms of $G$ order $p$ fixing $G^p\gamma_3(G)$ elementwise are inner, then the following holds:
\begin{itemize} 
\item [$(i)$] $C_G(G^p\gamma_3(G))\cap Z_3(G)\not\le Z(\Phi(G))$.
\item [$(ii)$] ${{d(G)+1}\choose{2}}\le r$, where $r$ is the coclass of $G$.
\end{itemize}
\end{th42}

\begin{th54}
Let $p\ge 5$ and let $G$ be a finite nonabelian $p$-group.
\begin{itemize}
\item [$(i)$] If $G$ is of coclass $4$, then $G$ admits a non-inner automorphism of order $p$ fixing $G^p\gamma_3(G)$ elementwise.
\item [$(ii)$] If $G$ is of coclass $5$, then $G$ admits a non-inner automorphism of order $p$ fixing $G^p\gamma_4(G)$ elementwise.
\end{itemize}
\end{th54}

The outline of the paper is as follows.

In Section $2$, we study the construction of derivations on finite $p$-groups. 

In Section $3$, we recall some well-known results about the existence of non-inner automorphisms of order $p$ in finite $p$-groups. 
 
Sections $4$ and $5$ are devoted to prove Theorem \ref{th:4.2} and Theorem \ref{th:5.4}, respectively. 

For a finite group $G$, $|G|, \exp(G), Z(G), Z_i(G)$, and $\Phi(G)$ denote the order, the exponent, the center, the $i$-th center, and the Frattini subgroup of $G$. For a finite $p$-group $G$, $d(G)$ and $\Omega_1(G)$ denote the minimal number of generators of $G$ and the subgroup of $G$ generated by all the elements of order $p$ in $G$. 

\section{Finite $p$-groups and derivations}
Let $G$ be a group and let $M$ be a right $G$-module. A derivation $\delta:G\rightarrow M$ is a function such that
\begin{align*}
\delta(gh)=& \delta(g)^h\ \delta(h)\ \text{for\ all}\ g, h\in G.
\end{align*}
\noindent And $\delta$ is a principal derivation if there exists $m\in M$ such that $\delta(g)= m^{-1} m^g$ for all $g\in G$. Let $Z^1(G, M)$ denote the abelian group of all derivations of $G$ to $M$ and $B^1(G, M)$ denote all principal derivations.

Note that the values of a derivation $\delta$ over a set of generators of $G$ will uniquely determine $\delta$. We set up the following notations: Let $F$ be a free group generated by a finite subset $X$ and let $G=\langle X\ |\ r_1, \dots, r_n\rangle$ be a group whose free presentation is $F/R$, where $R$ is the normal closure in $F$ of the set of relations $\{r_1, \dots, r_n\}$ of $G$. Let $\pi: F\rightarrow G$ be the canonical homomorphism. We have that $M$ is a $G$-module if and only if $M$ is an $F$-module on which $R$ acts trivially. Moreover, we have (cf.\ \cite{GKW}):
\begin{lemma}\label{L:2.1}
\begin{itemize}
\item [$(i)$] Let $M$ be an $F$-module. Then every function $f:X\rightarrow M$ extends in a unique way to a derivation $\delta :F\rightarrow M$.

\item [$(ii)$] Let $M$ be a $G$-module and let $\delta:G\rightarrow M$ be a derivation. Then $\bar{\delta}:F\rightarrow M$ given by the composition $\bar{\delta}(f)= \delta(\pi(f))$ is a derivation such that $\bar{\delta}(r_i)=0$ for all $i\in \{1, \dots, n\}$. Conversely, if $\bar{\delta}:F\rightarrow M$ is a derivation such that $\bar{\delta}(r_i)=0$ for all $i\in \{1, \dots, n\}$, then $\delta(fR)= \bar{\delta}(f)$ defines, uniquely, a derivation on $G=F/R$ to $M$ such that $\bar{\delta}= \delta\circ \pi$.
\end{itemize}
\end{lemma}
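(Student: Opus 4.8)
The plan is to handle both parts through the standard correspondence between derivations into $M$ and splittings of the associated semidirect product, reducing everything to the universal property of the free group $F$ on $X$. I follow the multiplicative convention of the definition, so the derivation identity reads $\delta(gh)=\delta(g)^h\,\delta(h)$, and I write the neutral element of $M$ as $0$ to match the statement.

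For part $(i)$, I would first record the two consequences of the derivation identity that any extension is forced to satisfy: putting $g=h=1$ gives $\delta(1)=0$, and then $\delta(gg^{-1})=0$ yields $\delta(g^{-1})=(\delta(g)^{-1})^{g^{-1}}$. Writing an arbitrary $w\in F$ as a reduced word in $X$ and iterating $\delta(gh)=\delta(g)^h\delta(h)$ expresses $\delta(w)$ entirely in terms of the values $f(x)=\delta(x)$ and the $F$-action, which already gives uniqueness. For existence — and to bypass checking that such a word formula is well defined on reduced words — I would instead form the semidirect product $M\rtimes F$ and observe that a map $\delta:F\to M$ is a derivation exactly when $g\mapsto(\delta(g),g)$ is a homomorphism splitting the projection $M\rtimes F\to F$. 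By the universal property of the free group there is a unique homomorphism $\varphi:F\to M\rtimes F$ with $\varphi(x)=(f(x),x)$ for all $x\in X$; since projecting $\varphi$ gives the identity on $X$, hence on $F$, we get $\varphi(g)=(\delta(g),g)$ for a unique $\delta$, which is the desired derivation extending $f$.

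For the forward direction of part $(ii)$ I would just verify that $\bar\delta=\delta\circ\pi$ inherits the derivation property: because the $F$-action on $M$ is the $G$-action pulled back along $\pi$, one has $\bar\delta(f_1)^{f_2}=\delta(\pi(f_1))^{\pi(f_2)}$, and expanding $\delta(\pi(f_1)\pi(f_2))$ with the derivation rule for $\delta$ produces the derivation rule for $\bar\delta$; moreover $\bar\delta(r_i)=\delta(\pi(r_i))=\delta(1)=0$ since $r_i\in R=\ker\pi$. The converse is where the real work lies. Given a derivation $\bar\delta$ with $\bar\delta(r_i)=0$, the prescription $\delta(fR):=\bar\delta(f)$ is well defined precisely when $\bar\delta$ is constant on cosets of $R$; since $\bar\delta(fr)=\bar\delta(f)^r\,\bar\delta(r)$ and $R$ acts trivially on $M$ (this is exactly the hypothesis that $M$ is a $G$-module), it suffices to prove that $\bar\delta$ vanishes on the entire normal closure $R$.

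The main obstacle is this vanishing statement, since a priori we only know $\bar\delta$ kills the chosen relators $r_i$, not every element of $R$. I would build up $R$ from $\{r_1,\dots,r_n\}$ in three steps: first $\bar\delta(r_i^{-1})=(\bar\delta(r_i)^{-1})^{r_i^{-1}}=0$; next, for a conjugate $w\,r_i^{\pm1}\,w^{-1}$, expanding with the derivation rule and using both $\bar\delta(r_i^{\pm1})=0$ and the triviality of the $R$-action (so that $m^{rw^{-1}}=m^{w^{-1}}$ for $r\in R$) collapses the expression to $0$; finally, if $s,t\in R$ already satisfy $\bar\delta(s)=\bar\delta(t)=0$, then $\bar\delta(st)=\bar\delta(s)^t\,\bar\delta(t)=0$ because the action fixes $0$. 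As every element of $R$ is a product of such conjugates, induction gives $\bar\delta|_R=0$. Once well-definedness is secured, that $\delta$ is a derivation on $G$ is a one-line translation, $\delta(fR\cdot f'R)=\bar\delta(ff')=\bar\delta(f)^{f'}\bar\delta(f')=\delta(fR)^{f'R}\,\delta(f'R)$, again using that the $G$-action of $f'R$ agrees with the $F$-action of $f'$; and uniqueness of $\delta$ with $\bar\delta=\delta\circ\pi$ follows from surjectivity of $\pi$.
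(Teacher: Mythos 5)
Your proposal is correct, but note that the paper never proves Lemma \ref{L:2.1} at all: it is quoted with a pointer to the literature (cf.\ \cite{GKW}), so the comparison is with the standard argument rather than with an in-paper proof. Your write-up reproduces that standard argument faithfully and fills in exactly the right details. In part $(i)$, the semidirect-product device --- observing that $\delta$ is a derivation precisely when $g\mapsto(\delta(g),g)$ splits the projection $M\rtimes F\to F$, then invoking freeness of $F$ --- is the canonical way to get existence while sidestepping any well-definedness check on reduced words; your uniqueness argument via $\delta(1)=0$ and $\delta(g^{-1})=(\delta(g)^{-1})^{g^{-1}}$ is also the expected one. In part $(ii)$, you correctly located the only genuinely nontrivial point: $\bar\delta$ is hypothesized to vanish only on the listed relators $r_1,\dots,r_n$, whereas well-definedness of $\delta(fR)=\bar\delta(f)$ requires vanishing on the full normal closure $R$. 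Your three-step induction (inverses, conjugates $wr_i^{\pm1}w^{-1}$, products) handles this, and the conjugate computation
\begin{equation*}
\bar\delta(wrw^{-1})=\bar\delta(wr)^{w^{-1}}\,\bar\delta(w^{-1})=\bar\delta(w)^{rw^{-1}}\,\bar\delta(w^{-1})=\bar\delta(w)^{w^{-1}}\,\bar\delta(w^{-1})=\bar\delta(ww^{-1})=0
\end{equation*}
is valid precisely because $R$ acts trivially on $M$ --- which, as you note, is exactly the paper's stated translation of ``$M$ is a $G$-module.'' This is the step most blind attempts fumble (by treating $R$ as merely the subgroup generated by the $r_i$), and you did not. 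The remaining verifications (well-definedness from $\bar\delta(fr)=\bar\delta(f)^{r}\bar\delta(r)=\bar\delta(f)$, the one-line derivation identity on $G$, and uniqueness from surjectivity of $\pi$) are all sound, so your proof can stand as a self-contained substitute for the citation.
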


For a $G$-module $M$, $M^G$ denote the submodule $\{m\in M| m^g=m\ \text{for\ all}\ g\in G\}$ and $[M, G]$ denote the submodule generated by the elements $m^{-1}m^g$ for $m\in M, g\in G$.
\begin{lemma}\label{L:2.2}
Let $p$ be an odd prime and let $G$ be an extra-special group of exponent $p$. Let $M$ be an elementary abelian $p$-group which is also a $G$-module. Suppose that $M^G$ and $[M, G]$ coincides and have order $p$, and that $d(M)\ge d(G)$. Then there exists a derivation $\delta:G\rightarrow M$ such that $\delta(\gamma_2(G))\neq 1$.
\end{lemma}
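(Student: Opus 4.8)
The plan is to apply Lemma~\ref{L:2.1} to the standard presentation of the extra-special group of exponent $p$,
\[
G=\langle x_1,\dots,x_{2n},z\mid x_i^{p}=z^{p}=1,\ [x_i,z]=1,\ [x_{2k-1},x_{2k}]=z,\ [x_i,x_j]=1\ \text{otherwise}\rangle ,
\]
so that producing $\delta$ amounts to choosing values $a_i:=\delta(x_i)$ and $b:=\delta(z)$ whose associated free-group derivation $\bar\delta$ annihilates every relator. First I would determine the module structure of $M$. Writing $L:=M^G$, the hypothesis that $[M,G]=M^G$ has order $p$ gives $[M,G,G]=[L,G]=1$, so every $g$ acts by $m\cdot g=m+\phi_g(m)$ with $\phi_g\colon M\to L$ vanishing on $L$; the assignment $g\mapsto\phi_g$ is a homomorphism into $\operatorname{Hom}(M/L,L)$ factoring through $G/\gamma_2(G)$, and since $\bigcap_g\ker\phi_g=M^G=L$ it is surjective onto $\operatorname{Hom}(M/L,L)\cong(M/L)^{*}$. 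With $d(M)\ge d(G)=2n$ this forces $\dim(M/L)\in\{2n-1,2n\}$.

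Next I would evaluate $\bar\delta$ on the relators. The power relators contribute $p\,\delta(x)+\tfrac{p(p-1)}{2}\,\phi_x(\delta(x))=0$ (here $p$ odd is essential), so they impose nothing; and a short direct computation using $[M,G,G]=1$ yields the linearized commutator rule $\bar\delta([g,h])=\phi_h(\delta(g))-\phi_g(\delta(h))$. Feeding this in, the relators $[x_i,z]$ force $\phi_{x_i}(b)=0$ for every $i$, i.e.\ $b\in M^G=L$, while the relators $[x_{2k-1},x_{2k}]z^{-1}$ and $[x_i,x_j]$ say precisely that the skew matrix $S_{ij}:=\phi_{x_j}(a_i)-\phi_{x_i}(a_j)$ equals $b\,J$, where $J$ is the symplectic matrix recording the commutators of $G$. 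Identifying $L\cong\mathbb F_p$ and setting $T_{ij}:=\phi_{x_j}(a_i)$, a choice of the $a_i$ is the same as a choice of the matrix $T$ whose rows lie in the subspace $W:=\operatorname{im}\bigl(M/L\to\mathbb F_p^{2n}\bigr)$ carved out by the functionals $\phi_{x_j}$, and I must solve $T-T^{\top}=b\,J$ with $b\neq0$.

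When $\dim(M/L)=2n$ the map $M/L\to\mathbb F_p^{2n}$ is an isomorphism, $W=\mathbb F_p^{2n}$, and skew-symmetrization already surjects onto the skew matrices, so $b=1$ and a $T$ with $T-T^{\top}=J$ works immediately. The real obstacle is the case $\dim(M/L)=2n-1$, forced exactly when $d(M)=d(G)$: here $W=\ker\psi$ is a hyperplane and the row condition becomes $Tu=0$ with $u=\psi^{\top}\neq0$. I would write $T=\tfrac12 J+A$ with $A$ symmetric (so $T-T^{\top}=J$ automatically), reducing the constraint to finding a symmetric $A$ with $Au=-\tfrac12 Ju$; this is always possible because the vectors $Au$ over symmetric $A$ already contain every $(v^{\top}u)\,v$, and these span $\mathbb F_p^{2n}$ as $u\neq0$. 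Choosing $b$ to be a generator of $L$ then gives $\delta(z)=b\neq0$, whence $\delta(\gamma_2(G))\neq1$, and Lemma~\ref{L:2.1} delivers the derivation $\delta$.
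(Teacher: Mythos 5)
Your proof is correct, but its core argument is genuinely different from the paper's. Both share the same outer skeleton: the standard presentation of the extra-special group, Lemma~\ref{L:2.1}, the observation that $\gamma_2$ acts trivially so that $\bar\delta([g,h])=\phi_h(\delta(g))-\phi_g(\delta(h))$, and the vanishing of the power relators via $p\mid\binom{p}{2}$. Where the paper then argues \emph{locally}, you argue \emph{globally}: the paper, for each symplectic pair of generators, considers the evaluation map $\sigma_i$ under the remaining $2n-2$ generators and uses the counting bound $|\Ker(\sigma_i)|\ge p^2>|M^G|$ to produce an element $a_i\notin M^G$ moved only by the $i$-th pair; the derivation is then supported on one generator per pair, normalized so that every $\delta([x_i,y_i])$ equals the same $z_0$. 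In your matrix language this amounts to exhibiting one particular solution $T$ with a single nonzero entry in each $2\times 2$ symplectic block, the kernel count guaranteeing each such row exists inside $W$. You instead first determine the module completely --- the action factors through $G/\gamma_2(G)$ and surjects onto $\Hom(M/L,L)$, forcing $\dim(M/L)\in\{2n-1,2n\}$ --- and then solve the full linear system $T-T^{\top}=bJ$ with rows constrained to $W$, handling the tight hyperplane case by the symmetric correction $T=\tfrac12 J+A$ with $Au=-\tfrac12 Ju$, which is solvable since the vectors $Au$ over symmetric $A$ span the whole space. The paper's route is shorter and requires no classification of $M$; yours buys more structural information: it shows the hypotheses force $d(M)\le d(G)+1$, it isolates exactly where $d(M)\ge d(G)$ enters (only to rule out $\operatorname{codim}(W)\ge 2$, where the system could fail with $b\ne 0$), and it solves the relator equations uniformly rather than through a specially supported solution. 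Two small points you should spell out in a final write-up: the image of $g\mapsto\phi_g$ is an $\mathbb{F}_p$-subspace of $\Hom(M/L,L)$ (a subgroup of an elementary abelian group), which is what upgrades ``trivial common kernel'' to surjectivity onto $(M/L)^{*}$; and the central generator $z$ acts trivially on $M$ because its image lies in $\gamma_2(G)$, which is what reduces the relators $[x_i,z]$ to $\phi_{x_i}(b)=0$ and lets you read $[x_{2k-1},x_{2k}]z^{-1}$ as $S_{2k-1,2k}=b$.
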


\begin{proof}
Recall that the order of an extra-special group is $p^{2n+1}$, and for every integer $n\ge 1$ and every odd prime $p$ there is only one isomorphism class for extra-special groups of order $p^{2n+1}$ and exponent $p$ (cf.\ \cite[p.\ 34]{GM}). Thus $d(G)=2n$ and $G$ has a presentation
\begin{align}
\notag & \langle x_1, y_1, \ldots, x_n, y_n, c| [x_i, x_j] = [y_i, y_j]= [x_i, y_j]=1\ \text{for}\ i\neq j,\\
\label{eq:2.2.1}& [x_i, c]= [y_i, c]=1,\ [x_i, y_i]=c,\ x_i^p= y_i^p= c^p=1\ \text{for}\ 1\le i\le n\rangle.
\end{align}

\noindent Let $F$ be a free group on $\{x_1, y_1, \ldots, x_n, y_n, c\}$. Then $M$ is considered as an $F$-module in a natural way. Now we define a map $\delta :\{x_1, y_1, \ldots, x_n, y_n, c\}\rightarrow M$. Let $1\neq z_0\in M^G$. For $i\in \{1, \ldots, n\}$, let $\sigma_i:M\rightarrow (M^G)^{2n-2}$ be given by

\begin{equation*}
\sigma_i(a)= \big((a^{-1})^{x_1}a, (a^{-1})^{y_1}a, \ldots, \widehat{(a^{-1})^{x_i}a}, \widehat{(a^{-1})^{y_i}a}, \ldots, (a^{-1})^{x_n}a, (a^{-1})^{y_n}a\big).
\end{equation*}

\noindent For $x\in F$ and $a, b\in M$, as $M$ is abelian, it follows that $((ab)^{-1})^xab=(a^{-1})^xa(b^{-1})^xb$. Thus $a\mapsto (a^{-1})^xa$ defines a homomorphism $M\rightarrow M^G$, and we get that $\sigma_i$ is a homomorphism. Noting that $|\im(\sigma_i)|\le p^{2n-2}$, $d(M)\ge 2n$ yields that $|\Ker(\sigma_i)|= \frac{|M|}{|\im(\sigma_i)|}\ge p^2$. Since $|M^G|=p\lneq |\Ker(\sigma_i)|$, there exists $a_i\in \Ker(\sigma_i)\setminus M^G$. Note that either $(a_i^{-1})^{x_i}a_i\neq 1$ or $(a_i^{-1})^{y_i}a_i\neq 1$. If $(a_i^{-1})^{x_i}a_i\neq 1$, then we have $M^G=\langle (a_i^{-1})^{x_i}a_i\rangle$. Thus there exists $k\in \{1, \ldots, p-1\}$ such that $z_0=((a_i^{-1})^{x_i}a_i)^k= ((a_i^k)^{-1})^{x_i}a_i^k$. We replace $a_i$ with $a_i^k$ so that $(a_i^{-1})^{x_i}a_i=z_0$, and define $\delta(x_i)=1$, $\delta(y_i)=a_i$. Similarly, if $(a_i^{-1})^{x_i}a_i=1$ and $(a_i^{-1})^{y_i}a_i\neq 1$, we replace $a_i$ with a power of $a_i$ so that $a_i^{-1}a_i^{y_i}=z_0$, and define $\delta(x_i)= a_i$, $\delta(y_i)=1$. Also define $\delta(c)=z_0$. By Lemma \ref{L:2.1} $(i)$, $\delta$ extends to a derivation $F\rightarrow M$. We proceed to check that $\delta$ preserves the relations in \eqref{eq:2.2.1}. First we see that $\gamma_2(F)$ acts trivially on $M$. For $a\in M$ and $x, y\in F$, $a^{xy}=(aa^{-1}a^x)^y=a^ya^{-1}a^x$, in which the last equality holds since $a^{-1}a^x\in M^G$. Similarly, $a^{yx}=a^xa^{-1}a^y=a^{xy}$, and hence $a^{[x, y]}=a$. Now we get an expression for $\delta([x, y])$, $x, y\in F$. Applying $\delta$ to the identity $xy=yx[x, y]$, we obtain $\delta(x)^y\delta(y)=\delta(y)^{x[x, y]}\delta(x)^{[x, y]}\delta([x, y])$. As the action of $[x, y]$ is trivial, we get that $\delta([x, y])= \delta(x)^{-1}\delta(x)^y(\delta(y)^{-1})^x\delta(y)$. Thus, for all $i\in\{1,\ldots, n\}$,
\begin{equation*}
\delta([x_i, y_i])= \delta(x_i)^{-1}\delta(x_i)^{y_i} (\delta(y_i)^{-1})^{x_i}\delta(y_i) =z_0=\delta(c).
\end{equation*}

\noindent Moreover, for all $x\in \{x_i, y_i\}$, $\delta([x, c])=\delta(x)^{-1}\delta(x)^c (z_0^{-1})^xz_0$. Since $\gamma_2(F)$ acts trivially, and the action of relations in \eqref{eq:2.2.1} is trivial, $c$ acts trivially on $M$. Also since $z_0\in M^G$, $(z_0^{-1})^xz_0=1$, and we obtain $\delta([x, c])=1$. Furthermore, as $a_i\in \Ker(\sigma_i)$, $x_j, y_j$ acts trivially on $a_i$ for $i\neq j$. Hence we deduce that $\delta([x,y])=1$ for all $x\in \{x_i, y_i\}$, $y\in \{x_j, y_j| i\neq j\}$. Now it remains to show $\delta(x^p)=1$ for all $x\in \{x_i, y_i, c\}$. Let $\delta(x)=a$. Then $\delta(x^p)=aa^x\cdots a^{x^{p-1}}$. First we show that $a^{x^i}=a(a^{-1}a^x)^i$ for all $i\ge 1$, which is trivially true when $i=1$. Now let $i\ge 1$. By induction hypothesis, $a^{x^{i+1}}=(a(a^{-1}a^x)^i)^x$. Since $a^{-1}a^x\in M^G$, $a^{x^{i+1}}=a^x(a^{-1}a^x)^i=aa^{-1}a^x(a^{-1}a^x)^i$, and the aim holds for $i+1$. Therefore, $\delta(x^p)=a(aa^{-1}a^x)(a(a^{-1}a^x)^2)\cdots (a(a^{-1}a^x)^{p-1})=a^p(a^{-1}a^x)^{{p}\choose{2}}$. We obtain $\delta(x^p)=1$ since $p\mid {{p}\choose{2}}$ as $p\ge 3$ and $M$ is elementary abelian. Hence, by Lemma \ref{L:2.1} $(ii)$, $\delta$ induces a derivation on $G$ which we again denote with $\delta$, and we have $\delta(c)=z_0\neq 1$ as required.
 \end{proof}
 
We prove the following lemma before proving the structure theorem mentioned in the introduction.
\begin{lemma}\label{L:2.3}
Let $p$ be an odd prime and let $G$ be a finite $p$-group with $|\gamma_2(G)|=\exp(G)=p$. Then either $G$ is an extra-special group or there exists $U, V\le G$ such that $G=U\times V$, where $U\le Z(G)$ is elementary abelian, and $V$ is an extra-special group. Furthermore, $G$ has a minimal generating set $\{x_1, y_1, \ldots, x_n, y_n, x_{2n+1}, \ldots, x_{d(G)}\}$, $d(G)\ge 2n$, such that $V=\langle x_1, y_1, \ldots, x_n, y_n\rangle$ and $U=\langle x_{2n+1}\rangle \times\cdots\times \langle x_{d(G)}\rangle$.
\end{lemma}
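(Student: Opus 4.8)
The plan is to reduce the group statement to the linear algebra of alternating forms via the commutator pairing. First I would observe that $G$ has nilpotency class at most $2$: since $\gamma_2(G)$ has order $p$, the conjugation action of the $p$-group $G$ on the cyclic group $\gamma_2(G)$ factors through a $p$-subgroup of $\Aut(\mathbb{Z}/p)\cong\mathbb{Z}/(p-1)$, which is trivial; hence $\gamma_2(G)\le Z(G)$ and $\gamma_3(G)=1$. Because $\exp(G)=p$ we also have $\Phi(G)=G^p\gamma_2(G)=\gamma_2(G)$, so $\bar V:=G/\gamma_2(G)$ is an elementary abelian $p$-group, i.e.\ an $\mathbb{F}_p$-vector space of dimension $d(G)$.

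The key device is the commutator pairing. Fixing a generator $c$ of $\gamma_2(G)$ and identifying $\gamma_2(G)\cong\mathbb{F}_p$ via $c\mapsto 1$, the class-$2$ commutator identities show that $B(\bar a,\bar b):=[a,b]$ is a well-defined alternating $\mathbb{F}_p$-bilinear form on $\bar V$, whose radical is exactly $Z(G)/\gamma_2(G)$ (an element lies in the radical iff it is central). I would then invoke the symplectic normal form for alternating forms over a field: choose a basis $\bar x_1,\bar y_1,\dots,\bar x_n,\bar y_n,\bar x_{2n+1},\dots,\bar x_{d(G)}$ of $\bar V$ with $B(\bar x_i,\bar y_i)=1$, all other pairings among $\bar x_1,\dots,\bar y_n$ vanishing, and $\bar x_{2n+1},\dots,\bar x_{d(G)}$ spanning the radical. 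Since $\gamma_2(G)\neq 1$ the form is nonzero, forcing $n\ge 1$.

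Next I would lift this basis to elements $x_1,y_1,\dots,x_n,y_n,x_{2n+1},\dots,x_{d(G)}\in G$; as their images form a basis of $G/\Phi(G)$, they constitute a minimal generating set, and in particular $d(G)\ge 2n$. Put $V=\langle x_1,y_1,\dots,x_n,y_n\rangle$ and $U=\langle x_{2n+1},\dots,x_{d(G)}\rangle$. The elements $x_{2n+1},\dots,x_{d(G)}$ lie in the radical of $B$, hence are central, so $U\le Z(G)$ and $U$ is elementary abelian; since their images are linearly independent in $\bar V$, no nontrivial product of them lies in $\gamma_2(G)$, giving $U\cap\gamma_2(G)=1$. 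On the other hand $[x_1,y_1]=c$ generates $\gamma_2(G)$, so $\gamma_2(G)\le V$; because $B$ restricted to $\langle\bar x_1,\dots,\bar y_n\rangle$ is nondegenerate, its radical is trivial, whence $Z(V)=\gamma_2(V)=\Phi(V)=\gamma_2(G)$ has order $p$, i.e.\ $V$ is extra-special of order $p^{2n+1}$.

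Finally I would assemble the direct product. From the direct-sum decomposition $\bar V=\langle\bar x_{2n+1},\dots,\bar x_{d(G)}\rangle\oplus\langle\bar x_1,\dots,\bar y_n\rangle$ one gets $U\cap V\le\gamma_2(G)$, and combined with $U\cap\gamma_2(G)=1$ this yields $U\cap V=1$; as $G=UV$ with $U$ central (so both factors normal), $G=U\times V$. When the radical is trivial, i.e.\ $d(G)=2n$, the subgroup $U$ is trivial and $G=V$ is extra-special, which gives the stated dichotomy. The only genuinely delicate point is the bookkeeping that simultaneously forces $\gamma_2(G)\le V$ while keeping $U\cap\gamma_2(G)=1$; everything else is the standard passage from a symplectic form to its associated extra-special group.
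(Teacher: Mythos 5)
Your proof is correct, but it takes a genuinely different route from the paper's. The paper argues top-down from cited structure theory: it invokes Berkovich's result (\cite[Lemma 4.2]{Ber1}) to write $G=VZ(G)$ with $V$ extra-special, notes $Z(G)$ is elementary abelian because $\exp(G)=p$, splits $Z(G)=Z(V)\times U$ to get $G=U\times V$ via $U\cap V\le U\cap Z(V)=1$, and then obtains $d(G)=2n+d(U)$ by an order count, using the uniqueness of the extra-special group of order $p^{2n+1}$ and exponent $p$ to know $d(V)=2n$. You instead build everything bottom-up from the alternating commutator form $B$ on $G/\gamma_2(G)$: the symplectic normal form hands you, in one stroke, the minimal generating set, the subgroup $V$ (whose extra-specialness follows from nondegeneracy of $B$ restricted to the hyperbolic part, forcing $Z(V)=\gamma_2(V)=\Phi(V)=\gamma_2(G)$), the central complement $U$ spanned by the radical, and the triviality of $U\cap V$ from the direct-sum decomposition together with $U\cap\gamma_2(G)=1$. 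Your version is self-contained (no appeal to Berkovich or to the classification of extra-special groups) and has the side benefit of producing exactly the symplectic-basis presentation \eqref{eq:2.2.1} that Lemma \ref{L:2.2} later relies on, whereas the paper's version is shorter modulo its citations. All the delicate points in your argument check out: well-definedness and bilinearity of $B$ use only $\gamma_2(G)\le Z(G)$ (which you correctly derive from $|\gamma_2(G)|=p$ and triviality of the $p$-group action on $\Aut(\mathbb{Z}/p)$), injectivity of $U\to G/\gamma_2(G)$ gives $U\cap\gamma_2(G)=1$ since every element of the abelian exponent-$p$ group $U$ is a product of powers of its generators, and $n\ge 1$ follows from $\gamma_2(G)\neq 1$, so the dichotomy ($U=1$ iff $d(G)=2n$) is exactly as stated.
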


\begin{proof}
Since $|\gamma_2(G)|=p$ and $\exp(G)=p$, applying \cite[Lemma 4.2]{Ber1} we obtain
\begin{equation}\label{eq:2.3.1}
G=VZ(G),
\end{equation}
\noindent where $V$ is an extra-special group. Note that $Z(V)\le Z(G)$ by \eqref{eq:2.3.1}. If $|Z(G)|=p$, then we get $Z(G)=Z(V)$. Thus $G=VZ(G)=V$, and $G$ is extra-special. Now let $|Z(G)| >p$. As $Z(G)$ is elementary abelian, we can write $Z(G)= Z(V)\times U$. By \eqref{eq:2.3.1} it now follows that $G=UV$. Furthermore, $U\cap V\le U\cap Z(V)=1$, where $U\cap V\le Z(V)$ holds since $U\le Z(G)$. Thus $G=U\times V$. Moreover, $|V|=p^{2n+1}$, and for an odd prime $p$ there is only one isomorphism class for extra-special groups of order $p^{2n+1}$ and exponent $p$ (cf.\ \cite[p.\ 34]{GM}). Hence $d(V)=2n$ and let $V=\langle x_1, y_1, \ldots, x_n, y_n\rangle$. Let $U=\langle x_{2n+1}\rangle\times\cdots\times\langle x_{2n+d(U)}\rangle$. We have that $G=\langle x_1, y_1, \ldots, x_n, y_n, x_{2n+1}, \ldots, x_{2n+d(U)}\rangle$. Since $\exp(G)=p$, $\Phi(G)=\gamma_2(G)$, and so $|G|=p^{d(G)}|\gamma_2(G)|=p^{d(G)}p$. On the other hand, $|G|=|U||V|=p^{d(U)}p^{2n+1}$, and hence we get $d(G)=2n+d(U)$. This completes the proof. 
\end{proof}

We recall the following fact about finite $p$-groups.
\begin{lemma}\label{L:2.4}
Let $G$ be a finite $p$-group and let $N, L$ be normal subgroups of $G$. If $N\le L[N, G]$, then $N\le L$.
\end{lemma}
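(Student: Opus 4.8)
The plan is to pass to the quotient by $L$ and then exploit the nilpotency of a finite $p$-group. I would set $\bar{G}=G/L$ and write $\bar{N}=NL/L$; this is a normal subgroup of $\bar{G}$ since both $N$ and $L$ are normal in $G$, so $NL\trianglelefteq G$. The hypothesis $N\le L[N,G]$ should translate, after passing to $\bar{G}$, into the single containment $\bar{N}\le[\bar{N},\bar{G}]$, because $L$ maps to the identity and the image of $[N,G]$ in $\bar G$ is exactly $[\bar N,\bar G]$. Thus the entire content of the lemma reduces to a statement purely about the $p$-group $\bar G$.

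The one identity worth checking carefully is the commutator-image formula for quotients, $[NL/L,\,G/L]=[N,G]L/L$, which is routine but is the hinge of the reduction. Together with $\overline{L[N,G]}=\overline{[N,G]}$, it yields $\bar{N}\le[\bar{N},\bar{G}]$, and the problem becomes: if $\bar{N}$ is a normal subgroup of a finite $p$-group $\bar{G}$ satisfying $\bar{N}\le[\bar{N},\bar{G}]$, then $\bar{N}=1$.

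This final step I would handle by iterating the commutator. Since $\bar{N}\trianglelefteq\bar{G}$ we automatically have $[\bar{N},\bar{G}]\le\bar{N}$, so in fact $\bar{N}=[\bar{N},\bar{G}]$; substituting this into itself repeatedly gives $\bar{N}=[\bar{N},{}_{k}\bar{G}]$ for every $k\ge 1$. Because $\bar{N}\le\bar{G}$, an easy induction shows $[\bar{N},{}_{k}\bar{G}]\le\gamma_{k+1}(\bar{G})$, and the nilpotency of the finite $p$-group $\bar{G}$ forces $\gamma_{k+1}(\bar{G})=1$ once $k$ reaches the nilpotency class. Hence $\bar{N}=1$, which is precisely $N\le L$.

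I do not expect a real obstacle in this argument: it is essentially the relative form of the standard fact that a nontrivial normal subgroup $N$ of a nilpotent group satisfies $[N,G]\lneq N$. The only places demanding a little attention are the bookkeeping in the commutator-image identity and ensuring the iteration is anchored by $\bar{N}\le\bar{G}$, so that it genuinely descends the lower central series of $\bar{G}$ to $1$ rather than stalling at a fixed subgroup.
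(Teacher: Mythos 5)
Your proof is correct, and it is the standard argument: the paper states Lemma~\ref{L:2.4} as a recalled fact without proof, so there is nothing it does differently. Every step you flag checks out — $NL\trianglelefteq G$, the image identity $[NL/L,\,G/L]=[N,G]L/L$, the resulting equality $\bar N=[\bar N,\bar G]$, and the iteration $\bar N=[\bar N,{}_k\,\bar G]\le\gamma_{k+1}(\bar G)=1$ for $k$ at least the nilpotency class of the finite $p$-group $\bar G$.
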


\begin{theorem}\label{th:2.5}
Let $p$ be an odd prime and let $G$ be a finite $p$-group. If $G$ is not powerful, then a normal subgroup $N$ of $G$ exists such that either $G/N$ is an extra-special group of exponent $p$ or $G/N=U\times V$ where $U\le Z(G/N)$ is elementary abelian and $V$ is an extra-special group of exponent $p$.
\end{theorem}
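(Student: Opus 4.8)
The plan is to reduce everything to Lemma~\ref{L:2.3} by exhibiting a normal subgroup $N$ for which the quotient $G/N$ satisfies the hypothesis $|\gamma_2(G/N)|=\exp(G/N)=p$; the two structural alternatives are then handed to us directly, and since $\exp(G/N)=p$ forces any extra-special factor to have exponent $p$, we land exactly on the stated conclusion.

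The first and, I expect, only delicate step is to translate the hypothesis that $G$ is not powerful into a statement modulo $G^p\gamma_3(G)$. Since $p$ is odd, $G$ not powerful means precisely $\gamma_2(G)\not\le G^p$. I claim this upgrades to $\gamma_2(G)\not\le G^p\gamma_3(G)$. Indeed, suppose to the contrary that $\gamma_2(G)\le G^p\gamma_3(G)$. Writing $N_0=\gamma_2(G)$ and $L=G^p$, and noting $\gamma_3(G)=[\gamma_2(G),G]=[N_0,G]$, this reads $N_0\le L[N_0,G]$; Lemma~\ref{L:2.4} then yields $N_0\le L$, i.e.\ $\gamma_2(G)\le G^p$, contradicting non-powerfulness. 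Hence $\gamma_2(G)\not\le G^p\gamma_3(G)$.

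Next I would pass to $\bar G=G/G^p\gamma_3(G)$. Because $G^p\le G^p\gamma_3(G)$ and $\gamma_3(G)\le G^p\gamma_3(G)$, the group $\bar G$ has exponent dividing $p$ and nilpotency class at most $2$, so $\gamma_2(\bar G)$ is central and elementary abelian; by the previous step it is nontrivial. Choose any maximal subgroup $M$ of the elementary abelian group $\gamma_2(\bar G)$, so $M$ has index $p$ in $\gamma_2(\bar G)$. Since $\gamma_2(\bar G)\le Z(\bar G)$, the subgroup $M$ is normal in $\bar G$, and as $M\le \gamma_2(\bar G)$ we get $\gamma_2(\bar G/M)=\gamma_2(\bar G)/M$ of order $p$, while $\exp(\bar G/M)\mid p$. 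Let $N$ be the preimage of $M$ under $G\twoheadrightarrow\bar G$; then $N\trianglelefteq G$ and $G/N\cong \bar G/M$ satisfies $|\gamma_2(G/N)|=p$ (in particular $G/N$ is nonabelian, forcing $\exp(G/N)=p$).

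Finally, applying Lemma~\ref{L:2.3} to $G/N$ gives the dichotomy: either $G/N$ is extra-special, or $G/N=U\times V$ with $U\le Z(G/N)$ elementary abelian and $V$ extra-special. In both cases the extra-special factor is a subgroup of the exponent-$p$ group $G/N$, hence has exponent $p$, which is precisely the conclusion of the theorem. The only place where genuine content enters is the passage from $\gamma_2(G)\not\le G^p$ to $\gamma_2(G)\not\le G^p\gamma_3(G)$; once Lemma~\ref{L:2.4} is invoked there, the remaining reductions are formal.
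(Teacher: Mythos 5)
Your proposal is correct and follows essentially the same route as the paper: both arguments use Lemma~\ref{L:2.4} with $N_0=\gamma_2(G)$, $L=G^p$ to show non-powerfulness forces $\gamma_2(G)\not\le G^p\gamma_3(G)$, then select a normal subgroup $N$ of index $p$ over $G^p\gamma_3(G)$ inside $\Phi(G)$ (your maximal subgroup $M$ of $\gamma_2(\bar G)$ pulls back to exactly such an $N$, since $\Phi(G)=\gamma_2(G)G^p$ is the preimage of $\gamma_2(\bar G)$), and finish with Lemma~\ref{L:2.3}. The only difference is presentational: the paper works with $\Phi(G)$ in $G$ and checks normality via $[\Phi(G),G]\le G^p\gamma_3(G)$, while you work in the quotient $\bar G=G/G^p\gamma_3(G)$ and use centrality of $\gamma_2(\bar G)$ — these are equivalent.
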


\begin{proof}
Since $G$ is not powerful, $G^p\gamma_3(G)\lneq \Phi(G)$. Otherwise, we have $\gamma_2(G)\le G^p\gamma_3(G)$, and this yields $\gamma_2(G)\le G^p$ by Lemma \ref{L:2.4}. Now there exists $G^p\gamma_3(G)\le N\le \Phi(G)$ such that $[\Phi(G):N]=p$. Since $N\le \Phi(G)$, and $[\Phi(G), G]\le G^p\gamma_3(G)\le N$, $N\unlhd G$. Clearly $\exp(G/N)=p$. Furthermore, as $G^p\le N$ and $N\lneq \Phi(G)$, $\gamma_2(G)\nleq N$, and hence $\Phi(G)=\gamma_2(G)N$. Thus, $\gamma_2(G/N)= \Phi(G)/N$ and has order $p$. Now the conclusion of the theorem follows by Lemma \ref{L:2.3}.
\end{proof}

\section{Useful results}
 All commutators used in this paper are left-normed and $[g, h]= g^{-1}h^{-1}gh=g^{-1}g^{h}$. We often use the following lemma by Mann \cite{AM}.
\begin{lemma}\label{L:3.1}(Mann)
Let $G$ be a $p$-group of class less than or equal to $p$, and let $x, y\in G$. Then $[x, y^p]=1$ is equivalent to $[x, y]^p=1$ and, similarly, it is equivalent to $[x^p, y]=1$.
\end{lemma}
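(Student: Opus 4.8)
The final statement is Lemma 3.1 (Mann): in a $p$-group $G$ of class at most $p$, for $x,y \in G$ one has the equivalences $[x,y^p]=1 \Leftrightarrow [x,y]^p=1 \Leftrightarrow [x^p,y]=1$.

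\medskip

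The plan is to derive these equivalences from a single commutator-collection identity valid in groups of class at most $p$. The key tool I would invoke is the expansion of $[x,y^p]$ (respectively $[x^p,y]$) via the Hall--Petrescu collection formula. Concretely, working in the subgroup $\langle x,y\rangle$, I would record that for any $x,y$ in a group of nilpotency class $\le p$ one can write
\begin{equation*}
[x,y^p] = [x,y]^p\, w,
\end{equation*}
where $w$ is a product of $p$-th powers of higher commutators together with commutators of weight at least $2$ in the $[x,y]$-variable, all lying in $\gamma_3(\langle x,y\rangle)$ or higher. The crucial arithmetic point is that when the class is at most $p$, the only ``dangerous'' correction term coming from the binomial coefficients of the collection process is $[x,y,\dots,y]$ (the left-normed commutator with $y$ repeated $p$ times), which has weight $p+1$ and therefore vanishes identically because the class is $\le p$. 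This is exactly why the hypothesis ``class $\le p$'' appears, and I expect this bookkeeping of which collected terms survive to be the main obstacle.

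\medskip

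Granting the identity $[x,y^p]=[x,y]^p$ modulo terms that I can show are trivial under the class hypothesis, the two equivalences follow cleanly. First I would prove $[x,y^p]=1 \Leftrightarrow [x,y]^p=1$: having established that $[x,y^p]$ and $[x,y]^p$ differ only by factors that are themselves trivial (or, more carefully, by an inductive argument on the class showing the correction terms are powers of higher-weight commutators that can be absorbed), one direction is immediate and the reverse follows symmetrically. For the symmetric statement $[x^p,y]=1 \Leftrightarrow [x,y]^p=1$, I would repeat the collection expansion for $[x^p,y]$, or more economically observe that $[x^p,y]=[y,x^p]^{-1}$ and apply the first equivalence with the roles of $x$ and $y$ interchanged, using that $[y,x]^p=1 \Leftrightarrow [x,y]^p=1$ since $[y,x]=[x,y]^{-1}$ and these generate the same cyclic subgroup modulo higher terms.

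\medskip

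To make the correction-term analysis rigorous I would argue by induction on the nilpotency class $c \le p$ of $G$. In the base case the group is abelian and all three expressions are trivially $1$. For the inductive step I would pass to the quotient $G/\gamma_c(G)$, which has class $c-1$, apply the induction hypothesis there, and then control the residual term lying in the central subgroup $\gamma_c(G)$ by an explicit collection computation; here the binomial coefficient $\binom{p}{k}$ being divisible by $p$ for $1 \le k \le p-1$ is what forces the surviving central correction to be a $p$-th power, which vanishes in the relevant elementary-abelian-type quotient. The delicate part, and the step I expect to demand the most care, is verifying that no commutator of weight exactly $p+1$ or a non-$p$-divisible coefficient sneaks into the collection when $c=p$ precisely; this is the sharp boundary case that the lemma is really about, and it is where the hypothesis cannot be weakened.
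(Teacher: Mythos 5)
A preliminary remark: the paper itself offers no proof of this lemma --- it is quoted from Mann's note \cite{AM} --- so your attempt can only be measured against the standard published argument. Against that standard, your proposal has a genuine gap at the decisive step. Your collection set-up is broadly right: one does have $[x,y^p]=c\,c^y\cdots c^{y^{p-1}}$ with $c=[x,y]$, and collecting gives $[x,y]^p$ times correction terms whose exponents are divisible by $p$ (the coefficient-$1$ term $[x,{}_{p}\,y]$ having weight $p+1$ and dying by the class hypothesis, as you say). But from $[x,y^p]=[x,y]^p\,w$ with $w$ a product of factors such as $[c,y]^{\binom{p}{2}}$, \emph{neither} direction of the equivalence is ``immediate'': these corrections are $p$-th powers of elements that may well have order $p^2$ or more, so divisibility of the exponent by $p$ does not make them vanish, and $[x,y]^p=1$ does not formally imply $[c,y]^p=1$. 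Your only mechanism for absorbing them --- ``the surviving central correction is a $p$-th power, which vanishes in the relevant elementary-abelian-type quotient'' --- is false as stated: in your induction the residue lies in $\gamma_c(G)$, which is central but in general not elementary abelian, so its $p$-th powers need not be trivial. Already at class $3$ one must separately prove that $c^p=1$ forces $[c,y]^p=1$, and your sketch contains no tool for this; and in the boundary case of class exactly $p$, the subgroup $\langle x,y\rangle$ need not be regular, so no regular-group power facts can be invoked there directly.

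The missing idea, which is how Mann actually proves the lemma, is to change the subgroup in which one works: set $H=\langle c,y\rangle$. Since $c\in\gamma_2(G)$ one has $\gamma_i(H)\le\gamma_{i+1}(G)$ for all $i$, so $H$ has class at most $p-1$ and is therefore \emph{regular} even when $G$ has class exactly $p$. The telescoping identity $[x,y^p]=c\,c^y\cdots c^{y^{p-1}}=(cy^{-1})^p\,y^p$ then reduces everything to the standard power-structure fact for regular $p$-groups that $a^p=b^p$ if and only if $(ab^{-1})^p=1$: applying it with $a=cy^{-1}$, $b=y^{-1}$ gives $[x,y^p]=1 \Leftrightarrow (cy^{-1})^p=y^{-p} \Leftrightarrow c^p=1$, and the equivalence with $[x^p,y]=1$ follows by exchanging the roles of $x$ and $y$ exactly as you indicate. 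Your binomial bookkeeping could in principle be completed, but only if supplemented by an argument of this kind --- regularity of a proper subgroup of strictly smaller class, or an honest induction establishing order statements for the higher commutators --- rather than by the quotient argument you propose, which is where the attempt fails.
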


\begin{corollary}\label{cr:3.2}
Let $G$ be a finite $p$-group and let $t\in Z_p(G)$. Then the following are equivalent.
\begin{itemize}
\item [$(i)$] $t\in C_G(G^p)$.
\item [$(ii)$] $t^p\in Z(G)$.
\item [$(iii)$] $[g, t]^p=1$ for all $g\in G$.
\end{itemize}
\end{corollary}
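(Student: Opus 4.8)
The plan is to reduce each of the three equivalences to a single-generator statement and then invoke Mann's Lemma (Lemma \ref{L:3.1}) inside the two-generator subgroup $H=\langle t,g\rangle$ for a fixed $g\in G$. Read one $g$ at a time, the conditions become: $(i)$ is $[t,g^p]=1$, $(ii)$ is $[t^p,g]=1$, and $(iii)$ is $[g,t]^p=1$, which is the same as $[t,g]^p=1$ since $[g,t]=[t,g]^{-1}$. Mann's Lemma asserts precisely that these three commutator identities are equivalent, as long as the ambient group has nilpotency class at most $p$. Thus the corollary will follow at once provided $H$ has class at most $p$ for every $g\in G$.

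The key step is therefore this class bound, and here I would use the hypothesis $t\in Z_p(G)$ together with the standard inclusion $[Z_i(G),G]\le Z_{i-1}(G)$. Since $H$ is generated by two elements, its commutator subgroup $\gamma_2(H)$ is the normal closure in $H$ of $[t,g]$. As $[t,g]\in[Z_p(G),G]\le Z_{p-1}(G)$ and $Z_{p-1}(G)\unlhd G$, it follows that $\gamma_2(H)\le Z_{p-1}(G)$. An easy induction then gives $\gamma_{i+1}(H)\le Z_{p-i}(G)$, because $\gamma_{i+1}(H)=[\gamma_i(H),H]\le[Z_{p-i+1}(G),G]\le Z_{p-i}(G)$. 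Taking $i=p$ yields $\gamma_{p+1}(H)\le Z_0(G)=1$, so $H$ has class at most $p$ and Mann's Lemma applies inside it.

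With the class bound established, the equivalences are immediate. Condition $(i)$ holds iff $t$ centralizes each generator $g^p$ of $G^p$, i.e.\ iff $[t,g^p]=1$ for all $g$; condition $(ii)$ holds iff $[t^p,g]=1$ for all $g$; and $(iii)$ is $[t,g]^p=1$ for all $g$. For each fixed $g$, Lemma \ref{L:3.1} applied in $H=\langle t,g\rangle$ gives $[t,g^p]=1\iff[t,g]^p=1\iff[t^p,g]=1$, and quantifying over all $g\in G$ delivers $(i)\iff(iii)\iff(ii)$.

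The main obstacle is exactly the class bound: everything depends on checking that $\langle t,g\rangle$ lies within the range where Mann's Lemma is valid. The membership $t\in Z_p(G)$ is used precisely to force $\gamma_{p+1}(\langle t,g\rangle)=1$; without it the individual commutator identities of Lemma \ref{L:3.1} need not hold, and the equivalences could fail.
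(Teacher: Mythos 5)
Your proof is correct and matches the paper's intended argument: the paper states this result without proof as an immediate corollary of Mann's Lemma \ref{L:3.1}, and your reduction to the two-generator subgroup $H=\langle t,g\rangle$, with the induction $\gamma_{i+1}(H)\le Z_{p-i}(G)$ forcing $\gamma_{p+1}(H)=1$ so that Mann's Lemma applies, is exactly the standard way to bridge the gap between the hypothesis $t\in Z_p(G)$ and the lemma's class-at-most-$p$ requirement. All three reductions to single-generator statements (centralizing $G^p$ via its generators $g^p$, and $[g,t]=[t,g]^{-1}$ for $(iii)$) are sound.
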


Let $N$ be a normal subgroup of $G$, then $Z(N)$ can be regarded as a $G/N$-module via conjugation in $G$. Let $C_{\Aut(G)}(G/N, N)$ denote the subgroup of $\Aut(G)$ consisting of all automorphisms $\alpha$ such that $x^{\alpha}=x$ for all $x\in N$ and $g^{-1}g^{\alpha}\in N$ for all $g\in G$. We use the following well-known fact (cf.\ \cite[Satz I.4.4]{BH}):
\begin{prop}\label{P:3.3}
Let $N$ be a normal subgroup of a group $G$, then there is a natural isomorphism $\varphi:Z^1(G/N, Z(N))\rightarrow C_{\Aut(G)}(G/N, N)$ given by $g^{\varphi(f)}=gf(gN)$ for all $g\in G$, $f\in Z^1(G/N, Z(N))$. The image of $B^1(G/N, Z(N))$ under $\varphi$ is the group of inner automorphisms of $G$ induced by elements of $Z(N)$.
\end{prop}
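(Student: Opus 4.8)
The plan is to construct the map $\varphi$ explicitly, produce an explicit inverse, and treat the principal derivations at the end. Throughout, $Z(N)$ is regarded as a right $G/N$-module by conjugation, $m^{gN}=g^{-1}mg$; this is well defined because $Z(N)$ is characteristic in $N$, hence normal in $G$, and because $N$ acts trivially on $Z(N)$ (every element of $N$ centralizes $Z(N)$). Note also that for any derivation $f$ one has $f(N)=1$, since $f(1)=f(1)f(1)$ forces $f(1)=1$.

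First I would check that each $f\in Z^1(G/N,Z(N))$ determines an endomorphism $\varphi(f)$ of $G$. The assignment $g^{\varphi(f)}=gf(gN)$ makes sense because $f(gN)\in Z(N)\le G$, and the derivation identity together with $f(gN)^{hN}=h^{-1}f(gN)h$ gives
\begin{equation*}
(gh)^{\varphi(f)}=gh\,f(gN)^{hN}f(hN)=g\,f(gN)\,h\,f(hN)=g^{\varphi(f)}h^{\varphi(f)}.
\end{equation*}
Since $f(N)=1$, the map $\varphi(f)$ fixes $N$ elementwise, and $g^{-1}g^{\varphi(f)}=f(gN)\in N$, so it induces the identity on $G/N$. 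I would then show $\varphi$ is a homomorphism: using that $\varphi(f_2)$ fixes $Z(N)\le N$ elementwise and that $Z(N)$ is abelian,
\begin{equation*}
g^{\varphi(f_1)\varphi(f_2)}=\big(gf_1(gN)\big)^{\varphi(f_2)}=gf_2(gN)f_1(gN)=g\,(f_1f_2)(gN)=g^{\varphi(f_1f_2)}.
\end{equation*}
The trivial derivation maps to $\id_G$, so $\varphi(f)\varphi(f^{-1})=\id_G$ and each $\varphi(f)$ is an automorphism lying in $C_{\Aut(G)}(G/N,N)$; injectivity of $\varphi$ is clear, as $\varphi(f)=\id_G$ forces $f(gN)=1$ for all $g$.

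For surjectivity I would, given $\alpha\in C_{\Aut(G)}(G/N,N)$, set $f(gN)=g^{-1}g^\alpha$ and verify this is a derivation with $\varphi(f)=\alpha$. The main obstacle is to see that $f(gN)$ genuinely lies in $Z(N)$ and is independent of the chosen representative. I would settle this as follows: for $n\in N$ the element $g^{-1}ng$ lies in $N$ and so is fixed by $\alpha$, whence $g^{-1}ng=(g^\alpha)^{-1}n\,g^\alpha$; rearranging shows $g(g^\alpha)^{-1}\in C_G(N)$, and since $C_G(N)\trianglelefteq G$ this yields $g^{-1}g^\alpha\in N\cap C_G(N)=Z(N)$. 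This same centrality makes $f$ independent of the representative, and the derivation property follows from $(gh)^{-1}(gh)^\alpha=h^{-1}(g^{-1}g^\alpha)h\,(h^{-1}h^\alpha)$; finally $g^{\varphi(f)}=g\,g^{-1}g^\alpha=g^\alpha$.

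It remains to identify the image of $B^1(G/N,Z(N))$. For $m\in Z(N)$ the principal derivation $f(gN)=m^{-1}m^{gN}$ gives $g^{\varphi(f)}=g\,m^{-1}g^{-1}mg$; since $m^{-1}$ and $g^{-1}mg$ both lie in the abelian group $Z(N)$ they commute, and a direct rearrangement identifies $\varphi(f)$ with conjugation by $m^{-1}$. As $m$ ranges over $Z(N)$ these are exactly the inner automorphisms of $G$ induced by elements of $Z(N)$, completing the identification.
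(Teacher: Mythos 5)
Your proof is correct and complete; note that the paper itself gives no proof of Proposition \ref{P:3.3}, quoting it as a well-known fact with a citation to Huppert (Satz I.4.4), and your argument is precisely the standard one found there. In particular you handle the only genuinely delicate step correctly --- for $\alpha\in C_{\Aut(G)}(G/N,N)$, deducing $g^{-1}g^{\alpha}\in N\cap C_G(N)=Z(N)$ from the elementwise fixing of $N$ and the normality of $C_G(N)$, which simultaneously gives well-definedness of the inverse map --- and your identification of $\varphi(B^1(G/N,Z(N)))$ with the inner automorphisms induced by elements of $Z(N)$ (via conjugation by $m^{-1}$, harmless since $m$ ranges over all of $Z(N)$) is also right.
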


In addition to the above proposition, suppose that $f\in Z^1(G/N, Z(N)\cap Z_i(G))$ and $\varphi(f)=i_u$ is an inner automorphism induced by $u$, then it follows that
\begin{equation}\label{eq:3.3.1}
f(gN)=g^{-1}g^{\varphi(f)}=[g, u]
\end{equation}
\noindent for all $g\in G$. Also $u\in C_G(N)\cap Z_{i+1}(G)$.

The following corollary is given for $N=\Phi(G)$ in \cite[Lemma 3.3]{AGW13}, but for an arbitrary $N$, the proof follows along the same lines, and we include a proof for the benefit of the reader.
\begin{corollary}\label{cr:3.4}
Let $G$ be a finite $p$-group and let $N$ be a normal subgroup of $G$ such that $C_G(N)=Z(N)$. Set $A=\Omega_1(Z(N))$ and $A^*=\{ a\in Z(N)| a^p\in Z(G)\}$. Suppose that all the automorphisms of $G$ of order $p$ fixing $N$ elementwise are inner, then $Z^1(G/N, A\cap Z_i(G)) \cong \frac{A^*\cap Z_{i+1}(G)}{Z(G)}$ for all $i\in \mathbb{N}$. In particular, if $G^p\le N$, then 
\begin{equation}\label{eq:3.4.1}
Z^1(G/N, A\cap Z_i(G))\cong \frac{Z(N)\cap Z_{i+1}(G)}{Z(G)}
\end{equation}
\noindent for all $i\in \{1, \ldots, p-1\}$.
\end{corollary}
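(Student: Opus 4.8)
The plan is to exploit the isomorphism $\varphi$ of Proposition~\ref{P:3.3} and to show that, under the standing hypothesis, it carries $Z^1(G/N, A\cap Z_i(G))$ onto the group of inner automorphisms induced by the elements of $A^*\cap Z_{i+1}(G)$. First I would restrict $\varphi$ to $Z^1(G/N, A\cap Z_i(G))\subseteq Z^1(G/N, Z(N))$ and observe that every $f$ in this subgroup produces an automorphism $\alpha=\varphi(f)$ of order dividing $p$. Indeed, since $\alpha$ fixes $N$ elementwise and $f(gN)\in Z(N)\le N$, an easy induction gives $g^{\alpha^{k}}=g\,f(gN)^{k}$; taking $k=p$ and using $f(gN)\in A=\Omega_1(Z(N))$, so that $f(gN)^{p}=1$, yields $\alpha^{p}=\id$. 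By the hypothesis that every automorphism of order $p$ fixing $N$ elementwise is inner (the identity being trivially inner), $\alpha=\varphi(f)$ is inner, say $\alpha=i_u$. As $\alpha$ fixes $N$ we get $u\in C_G(N)=Z(N)$, and by the observation following Proposition~\ref{P:3.3} (see \eqref{eq:3.3.1}), applicable because $A\cap Z_i(G)\le Z(N)\cap Z_i(G)$, we have $f(gN)=[g,u]$ and $u\in Z(N)\cap Z_{i+1}(G)$.

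Next I would pin down the precise condition on $u$. The crucial point is that $u\in Z(N)$ and $[g,u]=f(gN)\in Z(N)$ both lie in the abelian group $Z(N)$, hence commute, so the standard identity gives $[g,u^{p}]=[g,u]^{p}$. Since $f$ takes values in $A$, we have $[g,u]^{p}=1$ for all $g$, whence $[g,u^{p}]=1$ for all $g$, that is $u^{p}\in Z(G)$, and therefore $u\in A^*$. Conversely, for any $u\in A^*\cap Z_{i+1}(G)$ the inner automorphism $i_u$ lies in $C_{\Aut(G)}(G/N,N)$ because $u\in Z(N)=C_G(N)$, so $i_u=\varphi(f_u)$ with $f_u(gN)=[g,u]$; the same commuting identity gives $[g,u]^{p}=[g,u^{p}]=1$, while $u\in Z_{i+1}(G)$ forces $[g,u]\in Z_i(G)$, so $f_u\in Z^1(G/N, A\cap Z_i(G))$. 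Thus $\varphi$ restricts to an isomorphism of $Z^1(G/N, A\cap Z_i(G))$ onto $\{\,i_u : u\in A^*\cap Z_{i+1}(G)\,\}$.

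To finish the general statement, I would note that $u\mapsto i_u$ is a homomorphism from $A^*\cap Z_{i+1}(G)$ onto this last group with kernel $Z(G)$, using that $Z(G)\le A^*\cap Z_{i+1}(G)$ (every element of $Z(G)$ lies in $Z(N)$, has its $p$-th power in $Z(G)$, and lies in $Z_{i+1}(G)$). Composing with the restriction of $\varphi$ delivers $Z^1(G/N, A\cap Z_i(G))\cong (A^*\cap Z_{i+1}(G))/Z(G)$ for every $i$. For the final assertion I would check that $A^*\cap Z_{i+1}(G)=Z(N)\cap Z_{i+1}(G)$ once $G^{p}\le N$ and $1\le i\le p-1$. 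The inclusion $\subseteq$ is immediate from $A^*\subseteq Z(N)$; for the reverse, take $u\in Z(N)\cap Z_{i+1}(G)$, so that $u\in Z_p(G)$ since $i+1\le p$, and $u\in C_G(N)\subseteq C_G(G^{p})$ since $G^{p}\le N$. Corollary~\ref{cr:3.2}, implication $(i)\Rightarrow(ii)$, then forces $u^{p}\in Z(G)$, i.e.\ $u\in A^*$. Substituting this equality into the general isomorphism yields \eqref{eq:3.4.1}.

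I would expect the main obstacle to be the two-sided bookkeeping in identifying the image of $\varphi$: one must verify carefully that ``$f$ takes values in $A$'' is equivalent to ``$u^{p}\in Z(G)$''. The cleanest route is the observation that $u$ and $[g,u]$ commute inside the abelian group $Z(N)$, so that $[g,u^{p}]=[g,u]^{p}$ holds with no class hypothesis; this both drives the general isomorphism and isolates the single place, the final ``in particular'' clause, where Corollary~\ref{cr:3.2}, and hence the bound $i\le p-1$, is genuinely needed.
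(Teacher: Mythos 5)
Your proposal is correct and takes essentially the same route as the paper: both use Proposition \ref{P:3.3} to identify $Z^1(G/N, A\cap Z_i(G))$ with the principal derivations coming from $A^*\cap Z_{i+1}(G)$, and your map $u\mapsto i_u$ with kernel $Z(G)$ is exactly the paper's appeal to $B^1(H,M)\cong M/M^H$ together with $(A^*\cap Z_{i+1}(G))^{G/N}=Z(G)$, with Corollary \ref{cr:3.2} entering only in the final clause where $G^p\le N$ and $i\le p-1$. You merely spell out steps the paper leaves implicit, such as the induction $g^{\alpha^k}=g\,f(gN)^k$ giving $\alpha^p=\id$ and the identity $[g,u^p]=[g,u]^p$, valid because $u$ and $[g,u]$ both lie in the abelian group $Z(N)$.
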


\begin{proof}
Since $C_G(N)=Z(N)$, and all the automorphisms of $G$ of order $p$ fixing $N$ elementwise are inner, by Proposition \ref{P:3.3}, we have $Z^1(G/N, A)=B^1(G/N, A^*)$. Moreover, $Z^1(G/N, A\cap Z_i(G))= B^1(G/N, A^*\cap Z_{i+1}(G))$ for all $i\in\mathbb{N}$. Note that $Z(G)\le C_G(N)=Z(N)$. Hence $Z(G)\le A^*$ holds trivially. Thus $(A^*\cap Z_{i+1}(G))^{G/N}= Z(G)$, and by the fact that $B^1(H, M)\cong \frac{M}{M^H}$ for a $H$-module $M$, we get $B^1(G/N, A^*\cap Z_{i+1}(G))\cong \frac{A^*\cap Z_{i+1}(G)}{Z(G)}$. In addition, if $G^p\le N$, then $Z(N)\le C_G(G^p)$, and hence $(Z(N)\cap Z_p(G))^p\le Z(G)$ by Corollary \ref{cr:3.2}, and so $Z(N)\cap Z_p(G)\le A^*$. Therefore, $A^*\cap Z_{i+1}(G)= Z(N)\cap Z_{i+1}(G)$ for all $i\in \{1, \ldots, p-1\}$.
\end{proof}

Recall that for a maximal subgroup $\m$ of a finite $p$-group $G$, we either have $Z(\m)\le Z(G)$ or $C_G(Z(\m))=\m$.

We now collect some facts, which gives a reduction to the conjecture.
\begin{lemma}\label{L:3.5}
Let $G$ be a finite nonabelian $p$-group. Then $G$ admits a non-inner automorphism of order $p$ fixing $\Phi(G)$ elementwise, if one of the following occurs:
\begin{itemize}
\item [$(i)$] The nilpotency class of $G$ is $2$ or $3$ and $p\ge 3$ (\cite[Theorem 1]{LH}, \cite[Theorem\ 4.4]{AGW13}).
\item [$(ii)$] $G/Z(G)$ is powerful (\cite[Theorem\ 2.6]{A10}).
\item [$(iii)$] $C_G(Z(\Phi(G)))\neq \Phi(G)$ \cite{DS}.
\item [$(iv)$] $G$ is regular \cite{PS80, DS}.
\item [$(v)$] $Z(\m)\le Z(G)$ for a maximal subgroup $\m$ of $G$ (\cite[Lemma\ 9.108]{RJJ}).
\item [$(vii)$] $d\big(Z_2(G)/Z(G)\big)\neq d\big(Z(G)\big) d(G)$ (\cite[Corollary\ 2.3]{A10}).
\item [$(viii)$] $d\big(\Omega_1(Z_2(G))\big)< d\big(Z(G)\big) d(G)$ (\cite[Remark\ 2]{AG17}).
\end{itemize}
\end{lemma}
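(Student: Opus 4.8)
The plan is to treat Lemma~\ref{L:3.5} as a compilation rather than as a single unified statement: each of the seven hypotheses is already known in the literature to force a non-inner automorphism of order $p$ that fixes $\Phi(G)$ elementwise, so the proof reduces to matching each case with the appropriate reference and confirming that the conclusion quoted there is exactly the strong one asserted here (order \emph{exactly} $p$, and fixing $\Phi(G)$ \emph{pointwise}, not merely stabilizing it). For this reason I would not attempt one uniform argument but verify the cases in turn.

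Cases $(i)$–$(iv)$ are essentially verbatim restatements of cited theorems, so for these I would simply quote the sources. For $(i)$ the class-$2$ case is \cite[Theorem 1]{LH} and the class-$3$ case is \cite[Theorem 4.4]{AGW13}; for $(ii)$ it is \cite[Theorem 2.6]{A10}; for $(iii)$ it is the Deaconescu--Silberberg criterion \cite{DS}, which produces an automorphism of order $p$ centralizing $\Phi(G)$ precisely when $C_G(Z(\Phi(G)))\neq\Phi(G)$; and for $(iv)$ the regular case follows from \cite{PS80} together with \cite{DS}. In each instance the only genuine check is that ``non-inner of order $p$ fixing $\Phi(G)$'' is the literal output of the cited statement.

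The remaining cases $(v)$, $(vii)$, $(viii)$ are structural rank conditions and warrant slightly more than a bare citation. For $(v)$ I would invoke the stated dichotomy that for a maximal subgroup $\m$ either $Z(\m)\le Z(G)$ or $C_G(Z(\m))=\m$, and under the hypothesis $Z(\m)\le Z(G)$ apply \cite[Lemma 9.108]{RJJ}. Cases $(vii)$ and $(viii)$ are best read through the derivation framework of Proposition~\ref{P:3.3}: a non-inner automorphism of order $p$ fixing $\Phi(G)$ corresponds to a non-principal derivation in $Z^1(G/\Phi(G),\,\Omega_1(Z(\Phi(G))))$, and the comparison of $d\big(Z_2(G)/Z(G)\big)$ and $d\big(\Omega_1(Z_2(G))\big)$ against $d\big(Z(G)\big)d(G)$ counts the dimensions of the relevant spaces of derivations versus inner ones. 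Thus $(vii)$ follows from \cite[Corollary 2.3]{A10} and $(viii)$ from \cite[Remark 2]{AG17}.

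The main obstacle is not a deep computation but bookkeeping: ensuring that each cited result delivers an automorphism of order \emph{exactly} $p$ fixing $\Phi(G)$ \emph{pointwise} (several papers phrase their conclusions for automorphisms merely normalizing $\Phi(G)$, or of $p$-power order), and, for $(vii)$ and $(viii)$, translating the rank inequalities into the statement $\dim Z^1(G/\Phi(G),A) > \dim B^1(G/\Phi(G),A)$, so that a non-principal derivation, and hence a non-inner automorphism, is guaranteed to exist.
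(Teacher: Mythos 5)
Your proposal takes essentially the same approach as the paper: Lemma~\ref{L:3.5} is stated there without any proof, purely as a compilation of the cited results, which is precisely your case-by-case citation strategy (including reading $(v)$ through the maximal-subgroup dichotomy and $(vii)$, $(viii)$ through the derivation framework of Proposition~\ref{P:3.3}). Your added bookkeeping---verifying that each source delivers an automorphism of order exactly $p$ fixing $\Phi(G)$ elementwise---is the correct and only substantive check, and it is consistent with how the paper later uses the lemma.
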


\begin{lemma}\label{L:3.6}
Let $1\neq N$ be a proper normal subgroup of $G$. If $C_G(Z(N))=N$, then $Z(G)\lneq Z(N)=C_G(N)$.
\end{lemma}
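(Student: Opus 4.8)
The plan is to split the statement into two parts, the equality $Z(N)=C_G(N)$ and the strict inclusion $Z(G)\lneq Z(N)$, and to establish the equality first since the strictness will build on it. The whole argument is elementary and uses only the behaviour of centralizers under inclusion together with the hypothesis $C_G(Z(N))=N$; in particular it will not require $G$ to be a $p$-group.

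For the equality I would start from two inclusions that hold in any group. On one side, $Z(N)=C_N(N)=C_G(N)\cap N\le C_G(N)$. On the other side, since $Z(N)\le N$, any element of $G$ centralizing all of $N$ certainly centralizes the smaller subset $Z(N)$, so $C_G(N)\le C_G(Z(N))$. Feeding in the hypothesis $C_G(Z(N))=N$ then forces $C_G(N)\le N$, whence $C_G(N)=C_G(N)\cap N=Z(N)$. Combining the two inclusions yields $Z(N)=C_G(N)$.

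It remains to prove $Z(G)\lneq Z(N)$. For the inclusion I would note that $Z(G)$ centralizes every element of $G$, in particular every element of $N$, so $Z(G)\le C_G(N)=Z(N)$ by the equality just obtained. The only point needing a genuine (if short) argument is the strictness, and the key step there is a contradiction: suppose $Z(G)=Z(N)$. Since $Z(G)$ is central in $G$ we always have $C_G(Z(G))=G$; substituting $Z(G)=Z(N)$ and invoking the hypothesis gives $G=C_G(Z(G))=C_G(Z(N))=N$, contradicting that $N$ is a proper subgroup. Hence $Z(G)\neq Z(N)$, and together with $Z(G)\le Z(N)$ this gives $Z(G)\lneq Z(N)$. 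I do not anticipate any real obstacle: the main thing to get right is the direction of the two centralizer inclusions defining $C_G(N)=Z(N)$, and the clean use of the identity $C_G(Z(G))=G$ to collapse $N$ onto $G$ in the strictness step.
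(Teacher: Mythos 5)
Your proof is correct and matches the paper's argument essentially step for step: both obtain $C_G(N)\le C_G(Z(N))=N$ from $Z(N)\le N$ to get $C_G(N)=Z(N)$, then deduce $Z(G)\le C_G(N)=Z(N)$, and both derive strictness from the observation that $Z(G)=Z(N)$ would force $C_G(Z(N))=C_G(Z(G))=G$, contradicting $C_G(Z(N))=N\lneq G$. No gaps; the only difference is cosmetic (you phrase the last step as a direct contradiction, the paper as the contrapositive $Z(N)\not\le Z(G)$).
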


\begin{proof}
As $Z(N)\le N$, $C_G(N)\le C_G(Z(N))=N$, and hence $C_G(N)=Z(N)$. Furthermore, $Z(G)\le C_G(N)=Z(N)$. Since $C_G(Z(N))=N\lneq G$, we have $Z(N)\not\le Z(G)$.
\end{proof}

\begin{remark}\label{R:3.7}
Let $p$ be a prime and let $G$ be a finite nonabelian $p$-group. Let $Z_2^*(G)=\{a\in Z_2(G)| a^p\in Z(G)\}$. Then $Z_2^*(G)\le C_G(G^p)$ by Corollary \ref{cr:3.2}, and $[Z_2^*(G), \gamma_2(G)]=1$ holds trivially. Thus $Z_2^*(G)\le C_G(\Phi(G))$.
\end{remark}

\begin{remark}\label{R:3.8}
Let $p$ be an odd prime and let $G$ be a finite nonabelian $p$-group. Suppose that all the automorphisms of $G$ of order $p$ fixing $\Phi(G)$ elementwise are inner. Then $\Omega_1(Z_2(G))\le C_G(\Phi(G))=Z(\Phi(G))$. Furthermore, setting $G=G/\Phi(G)$, $A=C=\Omega_1(Z_2(G))$, and $D=\Omega_1(Z(G))$ in \cite[Lemma\ 2.3]{AGW13} yields that
\begin{equation*}
d\big(Z^1(G/\Phi(G), \Omega_1(Z_2(G)))\big)\ge d\big(\Omega_1(Z_2(G))\big)d(G)- d\big(\Omega_1(Z(G))\big) {{d(G)}\choose{2}}.
\end{equation*}
\noindent By \eqref{eq:3.4.1}, $Z^1(G/\Phi(G), \Omega_1(Z_2(G)))\cong \frac{Z(\Phi(G))\cap Z_3(G)}{Z(G)}$, and hence the above gives a lower bound for $d\bigg(\frac{Z(\Phi(G))\cap Z_3(G)}{Z(G)}\bigg)$.
\end{remark}

Any of the conditions of Lemma \ref{L:3.5} yields the existence of a non-inner automorphism of order $p$ fixing $G^p\gamma_3(G)\le \Phi(G)$ elementwise. In addition, we have the following lemma when $d(G)=2$.
\begin{lemma}\label{L:3.9}(\cite[Theorem\ 1]{AG17})
Let $p$ be an odd prime and let $G$ be a $2$-generator finite $p$-group. If $G$ fails to satisfy the condition $Z(\Phi(G))\lneq Z(G^p\gamma_3(G))= C_G(G^p\gamma_3(G))$ or if $d\bigg(\frac{Z(G^p\gamma_3(G))\cap Z_3(G)}{Z(G)}\bigg)< 2d\big(\Omega_1(Z_2(G))\big)$, then $G$ admits a non-inner automorphism of order $p$ fixing $G^p\gamma_3(G)$ elementwise.
\end{lemma}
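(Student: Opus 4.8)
The plan is to prove the contrapositive: assuming that every automorphism of $G$ of order $p$ fixing $N:=G^p\gamma_3(G)$ elementwise is inner, I would show that both $Z(\Phi(G))\lneq Z(N)=C_G(N)$ holds and that $d\bigl(\frac{Z(N)\cap Z_3(G)}{Z(G)}\bigr)\ge 2\,d(\Omega_1(Z_2(G)))$. The whole argument runs through the correspondence between automorphisms fixing $N$ and derivations, namely Proposition \ref{P:3.3}, together with Corollary \ref{cr:3.4}. Note that $G^p\le N$ and $\gamma_3(G)\le N$, so $G/N$ has class at most $2$ and exponent $p$; since $d(G)=2$ it is either elementary abelian of rank $2$ (which forces $G$ powerful by Lemma \ref{L:2.4}) or the Heisenberg group of order $p^3$.

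First I would reduce to the case $C_G(N)=Z(N)$. If instead $C_G(N)\neq Z(N)$, equivalently $C_G(N)\not\le N$, then an element of $C_G(N)\setminus N$ should be used to build a non-inner automorphism of order $p$ fixing $N$ by a Deaconescu--Silberberg type construction (organised through $C_G(Z(N))$ and Lemma \ref{L:3.6}), contradicting the standing assumption; hence $C_G(N)=Z(N)$. Because $N\le \Phi(G)$ one then has $C_G(\Phi(G))\le C_G(N)=Z(N)\le N$, so $Z(\Phi(G))=C_G(\Phi(G))\le Z(N)$; the strict inclusion $Z(\Phi(G))\lneq Z(N)$ has to be extracted from $N\lneq \Phi(G)$, which holds precisely when $G$ is not powerful, the powerful case being disposed of separately through Lemma \ref{L:3.5}. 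This establishes the first condition.

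For the dimension inequality I would first note that, since $N\le \Phi(G)$, every automorphism of order $p$ fixing $\Phi(G)$ elementwise also fixes $N$ elementwise and is therefore inner; Remark \ref{R:3.8} then gives $\Omega_1(Z_2(G))\le Z(\Phi(G))$, and with $C_G(N)=Z(N)$ this yields $\Omega_1(Z_2(G))\le Z(N)$ and $\Omega_1(Z(N))\cap Z_2(G)=\Omega_1(Z_2(G))$. Applying Corollary \ref{cr:3.4} with $i=2$ (legitimate since $2\le p-1$) gives $Z^1(G/N,\Omega_1(Z_2(G)))\cong \frac{Z(N)\cap Z_3(G)}{Z(G)}$. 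It then remains to compute $d\bigl(Z^1(G/N,\Omega_1(Z_2(G)))\bigr)$. Writing $M:=\Omega_1(Z_2(G))$, I would use that $d(G)=2$ so that $G/N$ is presented on two generators $a,b$ by the relators $a^p,b^p,[a,b]^p,[[a,b],a],[[a,b],b]$, and check, exactly as in the computations of Lemma \ref{L:2.2}, that every assignment of values in $M$ to $a$ and $b$ kills all these relators: the power relators vanish because $M$ is elementary abelian and $p$ is odd, while $[[a,b],a]$ and $[[a,b],b]$ vanish because $\gamma_2(G)$ acts trivially on $Z_2(G)$ and $[M,G]\le \Omega_1(Z(G))=M^{G/N}$. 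By Lemma \ref{L:2.1} this makes $\delta\mapsto(\delta(aN),\delta(bN))$ an isomorphism $Z^1(G/N,M)\cong M\oplus M$, so $d(Z^1(G/N,M))=2\,d(M)$, which is exactly $2\,d(\Omega_1(Z_2(G)))$ and gives the required inequality (indeed an equality).

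The main obstacle is the second paragraph rather than the counting. Producing the non-inner automorphism in the complementary situations --- when $C_G(N)\neq Z(N)$, or when $Z(\Phi(G))=Z(N)$ so that the strict inclusion fails --- is the delicate part, since there one must fall back on the maximal-subgroup dichotomy and Deaconescu--Silberberg constructions and separately dispose of the powerful case; by contrast the derivation count is clean precisely because $d(G)=2$ forces all relator constraints to collapse, and it is this collapse that produces the sharp constant $2$, as opposed to the weaker bound over $G/\Phi(G)$ in Remark \ref{R:3.8}.
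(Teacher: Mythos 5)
First, a point of comparison: the paper never proves Lemma \ref{L:3.9} at all --- it is imported verbatim from \cite[Theorem 1]{AG17} --- so there is no internal proof to match against, only the surrounding machinery. Measured against that toolkit, your counting half is correct and is exactly in the spirit of how the paper uses this machinery elsewhere (Theorems \ref{th:5.1} and \ref{th:5.2}): in the non-powerful case $G/N$ is extra-special of order $p^3$ for $N=G^p\gamma_3(G)$ (elementary abelian $G/N$ would force $\gamma_2(G)\le G^p$ by Lemma \ref{L:2.4}, and the powerful case is indeed covered by Lemma \ref{L:3.5}$(ii)$ since quotients of powerful groups are powerful); the module $M=\Omega_1(Z_2(G))$ satisfies $M\le Z_2^*(G)\le C_G(\Phi(G))\le C_G(N)$ by Remark \ref{R:3.7}; and since $[M,G]\le\Omega_1(Z(G))$ and $[\gamma_2(G),Z_2(G)]=1$, every relator check collapses for an arbitrary assignment of values in $M$ to the two generators --- including the power relators at $p=3$, where the potentially troublesome term $[a,x,x]$ vanishes because $[a,x]\in Z(G)$. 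Hence $Z^1(G/N,M)\cong M\oplus M$, and Corollary \ref{cr:3.4} with $i=2$ gives $d\bigl(\frac{Z(N)\cap Z_3(G)}{Z(G)}\bigr)=2d\bigl(\Omega_1(Z_2(G))\bigr)$, which is the required inequality.

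The genuine gap is the first clause, $Z(\Phi(G))\lneq Z(N)=C_G(N)$. Your proposal does not prove it but defers it to ``a Deaconescu--Silberberg type construction''; yet this clause is the substantive content of \cite[Theorem 1]{AG17}, not a routine preliminary, and your counting half depends on it (both the application of Corollary \ref{cr:3.4} and the inclusion $\Omega_1(Z_2(G))\le Z(N)$ presuppose $C_G(N)=Z(N)$). Moreover, the gestures you make point the wrong way. An element $u\in C_G(N)\setminus N$ only induces \emph{inner} automorphisms fixing $N$ elementwise, so its existence contradicts nothing by itself; one must exhibit a non-inner automorphism of order $p$, and the Deaconescu--Silberberg theorem (Lemma \ref{L:3.5}$(iii)$) does this only for $N=\Phi(G)$ --- transporting it to $N=G^p\gamma_3(G)$ is precisely the new work done in \cite{AG17}. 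Lemma \ref{L:3.6} cannot ``organise'' the reduction either, since it \emph{assumes} $C_G(Z(N))=N$, a statement of the very kind you are trying to establish. Finally, the strictness $Z(\Phi(G))\neq Z(N)$ is not ``extracted from $N\lneq\Phi(G)$'': centers are not monotone under passing to smaller normal subgroups, the inclusion $Z(\Phi(G))\le Z(N)$ itself already uses $C_G(N)=Z(N)$, and in the case $Z(N)=Z(\Phi(G))$ one must again manufacture a non-inner automorphism. So what you have is a correct, complete proof of the dimension inequality \emph{conditional} on the centralizer condition, while the centralizer condition --- the harder half, on which the whole lemma hinges --- remains unproven.
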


\begin{remark}\label{R:3.10}
Let $p$ be an odd prime and let $G$ be a finite nonabelian $p$-group such that $Z(G)\le Z(G^p\gamma_3(G))$. Since $[Z(G^p\gamma_3(G)), G^p]=1$, $(Z(G^p\gamma_3(G))\cap Z_3(G))^p\le Z(G)$ by Corollary \ref{cr:3.2}. Moreover, $|\Omega_1(A)|=\big|\frac{A}{A^p}\big|$ for every finite abelian $p$-group $A$, so we obtain $\big|\Omega_1(Z(G^p\gamma_3(G))\cap Z_3(G))\big| \ge \big|\frac{Z(G^p\gamma_3(G))\cap Z_3(G)}{Z(G)}\big|$. Both $\Omega_1(Z(G^p\gamma_3(G))\cap Z_3(G))$ and $\frac{Z(G^p\gamma_3(G))\cap Z_3(G)}{Z(G)}$ are elementary abelian, and thus $d\big(\Omega_1(Z_3(G^p\gamma_3(G))\cap Z_3(G))\big)\ge d\bigg(\frac{Z(G^p\gamma_3(G))\cap Z_3(G)}{Z(G)}\bigg)$.
\end{remark}

We rewrite these reductions as a hypothesis.
\begin{definition} 
We say that a finite nonabelian $p$-group $G$ of odd order satisfies Hypothesis $A$, if the following holds true for $G$.
\begin{itemize}
\item [$(i)$] The nilpotency class of $G$ is at least $4$.
\item [$(ii)$] $G/Z(G)$ is not powerful.
\item [$(iii)$] $C_G(Z(\Phi(G))=\Phi(G)$ and $Z(G)\lneq Z(\Phi(G))=C_G(\Phi(G))$.
\item [$(iv)$] $G$ is not regular.
\item [$(v)$] $C_G(Z(\m))=\m$ and $Z(G)\lneq Z(\m)= C_G(\m)$ for every maximal subgroup $\m$ of $G$.
\item [$(vi)$] $\Omega_1(Z_2(G))\le Z_2^*(G)\le Z(\Phi(G))$.
\item [$(vii)$] $G$ satisfies \eqref{eq:3.11.1}$-$\eqref{eq:3.11.3}.
\begin{align}
\label{eq:3.11.1} d\big(Z_2(G)/Z(G)\big)&= d\big(Z(G)\big) d(G).\\
\label{eq:3.11.2} d\big(\Omega_1(Z_2(G))\big)&\ge d\big(Z(G)\big) d(G).\\
\label{eq:3.11.3} d\bigg(\frac{Z(\Phi(G))\cap Z_3(G)}{Z(G)}\bigg)&\ge d\big(\Omega_1(Z_2(G))\big)d(G)- d\big(\Omega_1(Z(G))\big) {{d(G)}\choose{2}}.
\end{align}
\item [$(viii)$] Either $d(G)\ge 3$ or \eqref{eq:3.11.4}$-$\eqref{eq:3.11.6} holds for $G$: 
\begin{align}
\label{eq:3.11.4} Z(\Phi(G))\lneq Z(G^p\gamma_3(G))&= C_G(G^p\gamma_3(G)).\\
\label{eq:3.11.5} d\bigg(\frac{Z(G^p\gamma_3(G))\cap Z_3(G)}{Z(G)}\bigg)&\ge 2d\big(\Omega_1(Z_2(G))\big).\\
\label{eq:3.11.6} d\big(\Omega_1(Z(G^p\gamma_3(G))\cap Z_3(G))\big)&\ge 2d\big(\Omega_1(Z_2(G))\big).
\end{align}
\end{itemize}
\end{definition}

Let $p$ be an odd prime and let $G$ be a finite nonabelian $p$-group. If $d(G)\ge 3$ and $G$ does not have a non-inner automorphism of order $p$ fixing $\Phi(G)$ elementwise or if $d(G)=2$ and $G$ does not have a non-inner automorphism of order $p$ fixing $G^p\gamma_3(G)$ elementwise, then $G$ satisfies Hypothesis $A$.

\section{Finite $p$-groups having cyclic center}
This section proves the conjecture for every finite nonabelian $p$-group $G$ with cyclic center satisfying $C_G(G^p\gamma_3(G))\cap Z_3(G)\le Z(\Phi(G))$. We begin by proving the following lemma.

\begin{lemma}\label{L:4.1}
Let $p$ be an odd prime and let $G$ be a finite nonabelian $p$-group with cyclic center. Then $\Omega_1(Z_2(G))\le Z(G^p\gamma_3(G))$, if all the automorphisms of $G$ of order $p$ fixing $\Phi(G)$ elementwise are inner. \end{lemma}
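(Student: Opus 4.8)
The plan is to peel off the easy half of the conclusion with Remark \ref{R:3.8} and reduce everything to a single membership statement. Since by hypothesis every automorphism of order $p$ fixing $\Phi(G)$ elementwise is inner, Remark \ref{R:3.8} gives $\Omega_1(Z_2(G))\le C_G(\Phi(G))=Z(\Phi(G))$. Because $G^p\gamma_3(G)\le\Phi(G)$, this already yields $[\Omega_1(Z_2(G)),G^p\gamma_3(G)]=1$, so $\Omega_1(Z_2(G))\le C_G(G^p\gamma_3(G))$. As $Z(G^p\gamma_3(G))=C_G(G^p\gamma_3(G))\cap G^p\gamma_3(G)$, the whole lemma collapses to proving the single containment $\Omega_1(Z_2(G))\le G^p\gamma_3(G)$; the centralising half is free.

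Next I would cash in the cyclic center at the commutator level. Under the hypothesis, Lemma \ref{L:3.5}$(i)$ forbids nilpotency class $2$ and $3$, so the class $c$ of $G$ is at least $4$; hence $\gamma_c(G)\le\gamma_3(G)\le G^p\gamma_3(G)$, and since $\gamma_c(G)$ is a nontrivial subgroup of the \emph{cyclic} group $Z(G)$ it contains the unique minimal subgroup, giving $\Omega_1(Z(G))\le G^p\gamma_3(G)$. Now fix $t\in\Omega_1(Z_2(G))$: as $[G,t]\le Z(G)$ the assignment $g\mapsto[g,t]$ is a homomorphism $G\to Z(G)$, and it kills $\Phi(G)$ because $t\in Z(\Phi(G))$; its image is therefore an elementary abelian subgroup of the cyclic group $Z(G)$, hence lies in $\Omega_1(Z(G))$. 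Consequently $[\Omega_1(Z_2(G)),G]\le\Omega_1(Z(G))\le G^p\gamma_3(G)$, so that $\Omega_1(Z_2(G))\,G^p\gamma_3(G)$ is normal in $G$ and central modulo $G^p\gamma_3(G)$.

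The decisive and hardest step is to upgrade this commutator containment to the membership $\Omega_1(Z_2(G))\le G^p\gamma_3(G)$ of the elements themselves. I stress that Lemma \ref{L:2.4} is of no help here: the relevant commutator $[\Omega_1(Z_2(G)),G]$ already lies inside $G^p\gamma_3(G)$, so $L[N,G]=L$ and the lemma only returns the containment one is trying to establish. Instead I would argue by contradiction, assuming some $t\in\Omega_1(Z_2(G))$ with $t\notin G^p\gamma_3(G)$ and manufacturing a non-inner automorphism of order $p$ fixing $\Phi(G)$. The leverage comes from Corollary \ref{cr:3.4}: specialised to $N=\Phi(G)$ at level $i=1$, the isomorphism \eqref{eq:3.4.1} together with $Z^1(G/\Phi(G),\Omega_1(Z(G)))\cong\operatorname{Hom}(G/\Phi(G),\mathbb{F}_p)$ makes the commutator pairing
\[
G/\Phi(G)\ \times\ \frac{Z(\Phi(G))\cap Z_2(G)}{Z(G)}\ \longrightarrow\ \Omega_1(Z(G)),\qquad (g,a)\mapsto[g,a],
\]
perfect, whence $d\!\left(\frac{Z(\Phi(G))\cap Z_2(G)}{Z(G)}\right)=d(G)$. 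A protruding $t$ would then be fed, via Theorem \ref{th:2.5} and the derivation construction of Lemma \ref{L:2.2}, into a class of $Z^1(G/\Phi(G),\Omega_1(Z(\Phi(G))))$ that is not principal, i.e.\ a non-inner automorphism of order $p$ fixing $\Phi(G)$ elementwise, contradicting the hypothesis.

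The main obstacle is exactly this last construction. The elementary commutator estimates pin down $[\Omega_1(Z_2(G)),G]$ but say nothing about the \emph{position} of $t$ modulo $G^p\gamma_3(G)$; moreover the naive derivation $g\mapsto[g,t]$ attached to $t$ is principal (induced by conjugation by $t$, since $t\in C_G(\Phi(G))$ with $t^p=1$) and hence inner, so the non-inner witness must be built from the structural data rather than from $t$ alone. Controlling whether $Z(G)$ itself sits inside $G^p\gamma_3(G)$ (only $\Omega_1(Z(G))$ is immediate) is the delicate point I expect to consume most of the work.
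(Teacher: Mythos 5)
Your reduction is sound and coincides with the opening of the paper's proof: Remark \ref{R:3.8} gives $\Omega_1(Z_2(G))\le C_G(\Phi(G))=Z(\Phi(G))$, which settles the centralising half, so the lemma collapses to the single containment $\Omega_1(Z_2(G))\le G^p\gamma_3(G)$. Your intermediate observations are also correct: the class is at least $4$, $\Omega_1(Z(G))\le \gamma_c(G)\le\gamma_3(G)$, and $[\Omega_1(Z_2(G)),G]\le\Omega_1(Z(G))$ (the paper gets this last from Corollary \ref{cr:3.2}), and your diagnosis that Lemma \ref{L:2.4} cannot close the loop is accurate. But the decisive step is exactly where the proposal stops being a proof. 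Nothing in your setup connects the assumption $t\notin G^p\gamma_3(G)$ to the non-principality of any cocycle: as you yourself concede, the only derivation visibly attached to $t$, namely $g\mapsto[g,t]$, is principal, and the machinery you invoke (Theorem \ref{th:2.5} plus Lemma \ref{L:2.2}) produces derivations on quotients $G/N$ with $G^p\gamma_3(G)\le N$ whose construction in the paper actually \emph{presupposes} Lemma \ref{L:4.1} (it is used in Theorem \ref{th:4.2}$(i)$ precisely to place $\Omega_1(Z_2(G))$ inside $Z(N)$), so reaching for it here is close to circular. The contradiction is announced, not derived; this is a genuine gap.

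The paper's argument is direct and uses the innerness hypothesis \emph{positively} rather than for a contradiction, one element at a time. Fix $a\in\Omega_1(Z_2(G))$. Since every $[a,g]$ lies in $\Omega_1(Z(G))$, which has order $p$ because $Z(G)$ is cyclic, $a$ has at most $p$ conjugates; hence either $a\in\Omega_1(Z(G))$, in which case $a\in Z(\m)$ for every maximal subgroup $\m$, or $\m:=C_G(a)$ is maximal with $a\in Z(\m)$. Picking $g\in G\setminus\m$, the norm condition $aa^g\cdots a^{g^{p-1}}=a^p[a,g]^{\binom{p}{2}}=1$ yields a derivation $\delta:G/\m\rightarrow Z(\m)$ with $\delta(g\m)=a$, and $\alpha=\varphi(\delta)$ is an automorphism of order $p$ fixing $\m\supseteq\Phi(G)$ elementwise. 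By hypothesis $\alpha=i_u$ is inner, forcing $u\in C_G(\Phi(G))=Z(\Phi(G))\le\Phi(G)$, and then \eqref{eq:3.3.1} gives $a=[g,u]\in[G,\Phi(G)]\le G^p\gamma_3(G)$. This per-element trick — letting the inner automorphism supplied by the hypothesis write $a$ itself as a commutator with a Frattini element — is the idea your plan is missing; once it is in hand, no cocycle counting or perfect-pairing argument is needed, and your worry about whether $Z(G)$ (rather than just $\Omega_1(Z(G))$) sits inside $G^p\gamma_3(G)$ simply never arises.
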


\begin{proof}
We assume Hypothesis $A$ for $G$. Thus $\Omega_1(Z_2(G))\le Z(\Phi(G))$, and so $[G^p\gamma_3(G), \Omega_1(Z_2(G))]=1$. Now we proceed to show that $\Omega_1(Z_2(G))\le G^p\gamma_3(G)$. Let $a\in \Omega_1(Z_2(G))$. For all $g\in G$, $a^g= a[a, g]$, and since $a^p=1$, we get $[a, g]^p=1$ by Corollary \ref{cr:3.2}, and so $[a, g]\in \Omega_1(Z(G))$. Since $|\Omega_1(Z(G))|=p$, we obtain that the number of conjugates of $a$ in $G$ is at most $p$. Thus, either $a\in \Omega_1(Z(G))$, or $[G:C_G(a)]=p$, and $a\in Z(C_G(a))$ in the latter case. Furthermore, if $a\in\Omega_1(Z(G))$, then $a\in Z(\m)$ for every maximal subgroup $\m$ of $G$ by Hypothesis $A$. Thus in either case, $a\in Z(\m)$ for a maximal subgroup $\m$ of $G$. Let $g\in G\setminus \m$. Since $a\in \Omega_1(Z_2(G))$, $aa^g\cdots a^{g^{p-1}}= a^p[a, g]^{{p}\choose{2}}=1$, and so a derivation $\delta:G/\m\rightarrow Z(\m)$ exists with $\delta(g\m)=a$. Note that the order of $\delta$ is $p$. Let $\alpha= \varphi(\delta)\in C_{\Aut(G)}(G/\m, \m)$. Then $\alpha$ has order $p$, and fixes $\Phi(G)\le \m$ elementwise. Hence $\alpha=i_u$ is an inner automorphism of $G$. It now follows that $u\in C_G(\Phi(G))=Z(\Phi(G))$, and $a=[g, u]$ by \eqref{eq:3.3.1}. Thus $a\in [G, \Phi(G)]\le G^p\gamma_3(G)$.
\end{proof}

Below we recall a couple of well-known commutator identities that are often used in this paper. For $x, y, z$, elements of a group $G$, we have
\begin{align}
\label{eq:4.1.1} [xy, z]&= [x, z]^y\ [y, z]= [x, z] [x, z, y] [x, y]\ \text{and}\\
\label{eq:4.1.2} [z, xy]&= [z, y] [z, x]^y= [z, y] [z, x] [z, x, y].
\end{align}

Abdollahi and Ghoraishi \cite[Theorem\ 1]{AG17} proved that if a $2$-generator finite $p$-group $G$ of odd order does not have a non-inner automorphism of order $p$ fixing $G^p\gamma_3(G)$ elementwise, then $Z(\Phi(G))\lneq C_G(G^p\gamma_3(G))=Z(G^p\gamma_3(G))$. Abdollahi \cite[Theorem\ 2.5]{A10} proved that if a finite nonabelian $p$-group $G$ of coclass $r$ does not have a non-inner automorphism of order $p$ fixing $\Phi(G)$ elementwise, then $d\big(Z(G)\big)\big(d(G)+1\big)\le r +1$. In the next theorem, we obtain similar reductions to the conjecture when $Z(G)$ is cyclic.

\begin{theorem}\label{th:4.2}
Let $p$ be an odd prime and let $G$ be a finite nonabelian $p$-group with cyclic center. Suppose that all the automorphisms of $G$ order $p$ fixing $G^p\gamma_3(G)$ elementwise are inner, then the following holds:
\begin{itemize} 
\item [$(i)$] $C_G(G^p\gamma_3(G))\cap Z_3(G)\not\le Z(\Phi(G))$.
\item [$(ii)$] ${{d(G)+1}\choose{2}}\le r$, where $r$ is the coclass of $G$.
\end{itemize}
\end{theorem}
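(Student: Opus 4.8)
The first thing I would record is that the standing assumption is \emph{stronger} than the one driving Hypothesis $A$: since $G^p\gamma_3(G)\le\Phi(G)$, every automorphism fixing $\Phi(G)$ elementwise also fixes $G^p\gamma_3(G)$ elementwise, so our assumption forces all order-$p$ automorphisms fixing $\Phi(G)$ to be inner. By the remark closing Section~$3$, $G$ therefore satisfies Hypothesis $A$ (when $d(G)=2$ the extra requirements \eqref{eq:3.11.4}--\eqref{eq:3.11.6} are exactly what the stronger $G^p\gamma_3(G)$-hypothesis supplies). In particular $G$ is not powerful, because $G/Z(G)$ is not powerful by Hypothesis $A$ and powerfulness passes to quotients. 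My plan for $(i)$ is not to argue by contradiction but to \emph{manufacture}, via the machinery of Section~$2$, one order-$p$ automorphism fixing $N:=G^p\gamma_3(G)$ that moves $\gamma_2(G)$, and then read the required element directly off the hypothesis.

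Concretely, since $G$ is not powerful, the proof of Theorem~\ref{th:2.5} produces $N_0\unlhd G$ with $G^p\gamma_3(G)\le N_0\lneq\Phi(G)$, $[\Phi(G):N_0]=p$, and $G/N_0$ extra-special of exponent $p$ (or $U\times V$ with $U$ central elementary abelian and $V$ extra-special). I would take coefficient module $M=\Omega_1(Z_2(G))$. By Lemma~\ref{L:4.1} one has $M\le Z(G^p\gamma_3(G))=Z(N)$, while Hypothesis $A$ gives $M\le Z(\Phi(G))$, so $\Phi(G)$ (hence $N_0$) acts trivially on $M$ and $M$ is a $G/N_0$-module sitting inside $Z(N)\cap Z_2(G)$. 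Because $Z(G)$ is cyclic, $M^{G/N_0}=M\cap Z(G)=\Omega_1(Z(G))$ has order $p$; the computation in the proof of Lemma~\ref{L:4.1} shows $[M,G/N_0]=[M,G]\le\Omega_1(Z(G))$, and this is nontrivial since $d(M)\ge d(G)\ge 2>1=d(\Omega_1(Z(G)))$ forces $M\not\le Z(G)$, so $M^{G/N_0}=[M,G/N_0]$ has order $p$; finally \eqref{eq:3.11.2} gives $d(M)\ge d(G)=d(G/N_0)$. These are precisely the hypotheses of Lemma~\ref{L:2.2}, which yields a derivation $\bar\delta\colon G/N_0\to M$ with $\bar\delta(\gamma_2(G/N_0))\neq1$ (in the $U\times V$ case I would apply Lemma~\ref{L:2.2} to the extra-special factor $V$ and extend trivially over the central factor $U$).

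Pulling $\bar\delta$ back along $G/N\twoheadrightarrow G/N_0$ gives a derivation $\delta\colon G/N\to Z(N)$ with $\delta(\gamma_2(G)N/N)\neq1$, valued in the elementary abelian $M$ and hence of order $p$. By Proposition~\ref{P:3.3}, $\alpha=\varphi(\delta)$ is an automorphism of order $p$ fixing $N=G^p\gamma_3(G)$ elementwise, so by hypothesis $\alpha=i_u$ is inner. Since $\delta\in Z^1(G/N,Z(N)\cap Z_2(G))$, \eqref{eq:3.3.1} gives $u\in C_G(N)\cap Z_3(G)$ and $\delta(gN)=[g,u]$ for all $g$. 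Choosing $c\in\gamma_2(G)$ with $\delta(cN)\neq1$ yields $[c,u]\neq1$, so $u$ does not centralise $\gamma_2(G)\le\Phi(G)$, whence $u\notin C_G(\Phi(G))=Z(\Phi(G))$ by Hypothesis $A$. Thus $u\in\big(C_G(G^p\gamma_3(G))\cap Z_3(G)\big)\setminus Z(\Phi(G))$, which is exactly $(i)$.

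For $(ii)$ I would first extract a rank inequality purely from Hypothesis $A$: combining \eqref{eq:3.11.2} and \eqref{eq:3.11.3} with $d(\Omega_1(Z(G)))=1$ (cyclic center) gives
\[
d\bigg(\frac{Z(\Phi(G))\cap Z_3(G)}{Z(G)}\bigg)\ge d\big(\Omega_1(Z_2(G))\big)\,d(G)-{{d(G)}\choose{2}}\ge d(G)^2-{{d(G)}\choose{2}}={{d(G)+1}\choose{2}}.
\]
This elementary abelian section embeds in $Z_3(G)/Z(G)$, and the remaining task is to convert this single-section rank bound into the coclass inequality ${{d(G)+1}\choose{2}}\le r$. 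The route I would take is to estimate $r=\log_p|G|-c=\sum_i(\log_p|Z_i(G)/Z_{i-1}(G)|-1)$: the section splits across $Z_2(G)/Z(G)$ and $Z_3(G)/Z_2(G)$, whose ranks are governed by \eqref{eq:3.11.1} and by the image-in-$Z_3/Z_2$ part of the displayed bound, while the faithful action $C_G(Z(\Phi(G)))=\Phi(G)$ and the class condition $c\ge4$ of Hypothesis $A$ keep the higher factors nontrivial. I expect this coclass count to be the main obstacle: a naive term-by-term estimate loses one unit at each of the two central steps carrying the section and only yields ${{d(G)+1}\choose{2}}-2\le r$. Closing this two-unit gap --- showing the losses are compensated, using that the cyclic center makes $|\Omega_1(Z(G))|=p$ tight, together with the faithfulness and class hypotheses (and the extra direction in $Z_3(G)$ produced in $(i)$) --- is the delicate heart of $(ii)$.
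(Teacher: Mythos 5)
Your reduction to Hypothesis $A$ and your skeleton for $(i)$ (Theorem~\ref{th:2.5} quotient, a Lemma~\ref{L:2.2} derivation, Proposition~\ref{P:3.3}, then reading $u$ off \eqref{eq:3.3.1}) coincide with the paper's, and in the case $G/N_0$ extra-special your verification is complete. The genuine gap is the mixed case $G/N_0=U\times V$. The hypotheses of Lemma~\ref{L:2.2} concern the module structure over the \emph{acting extra-special group}, i.e.\ over $V$, and you verify them only for the full action of $G/N_0$ on $M=\Omega_1(Z_2(G))$: knowing $M^{G/N_0}=[M,G/N_0]=\Omega_1(Z(G))$ has order $p$ says nothing about $M^{V}$, which may strictly contain $\Omega_1(Z(G))$ (elements fixed by $V$ but moved by $U$), and a priori $[M,V]$ could even be trivial, in which case every derivation $V\to M$ is a homomorphism and kills $\gamma_2(V)$, so the construction collapses. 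Moreover the extension $(1_{U/N_0},\delta)$ via \cite[Lemma~1.2]{CG06} needs $U$ to act trivially on the values of $\delta$. The paper fixes all of this at once by shrinking the coefficient module to $C_{\Omega_1(Z_2(G))}(U)$ and supplying a counting step you omit: the map $\sigma(a)=\big([x_{2n+1},a],\ldots,[x_{d(G)},a]\big)$ is a homomorphism with kernel $C_{\Omega_1(Z_2(G))}(U)$ and image in $\Omega_1(Z(G))^{d(G)-2n}$, so \eqref{eq:3.11.2} forces $\big|C_{\Omega_1(Z_2(G))}(U)\big|\ge p^{2n}$; hence its rank is at least $2n=d(V/N_0)$, and since it is then not contained in $\Omega_1(Z(G))$ while $U$ and $N_0$ act trivially on it, one gets $[C_{\Omega_1(Z_2(G))}(U),V/N_0]=\Omega_1(Z(G))$ equal to the $V/N_0$-fixed points. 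Your inequality $d(M)\ge d(G)\ge d(V)$ for the full module is not a substitute for this.

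Part $(ii)$ is conceded incomplete in your proposal: you correctly derive the rank bound \eqref{eq:4.2.1} but, as you note, your layer count only yields $\binom{d(G)+1}{2}\le r+2$. The two missing units are exactly the two devices you mention but never execute, and no faithfulness argument or layer-by-layer analysis of the action is needed. Writing $|G|=p^n$ and $c=n-r\ge 4$ (Hypothesis $A$), the paper observes (a) $G/Z_{n-r-1}(G)$ is noncyclic, so contributes $p^2$ rather than $p$ at the top, and (b) part $(i)$, already proved at that stage, gives $Z_3(G)\not\le Z(\Phi(G))$, so $\big|\frac{Z_3(G)}{Z(G)}\big|\ge p^{\binom{d(G)+1}{2}+1}$, one better than \eqref{eq:4.2.1} alone. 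Then
\begin{equation*}
p^n=\big|Z(G)\big|\,\bigg|\frac{Z_3(G)}{Z(G)}\bigg|\,\bigg|\frac{Z_{n-r-1}(G)}{Z_3(G)}\bigg|\,\bigg|\frac{G}{Z_{n-r-1}(G)}\bigg|\ \ge\ p\cdot p^{\binom{d(G)+1}{2}+1}\cdot p^{\,n-r-4}\cdot p^2,
\end{equation*}
which rearranges to $\binom{d(G)+1}{2}\le r$. So once $(i)$ is repaired as above, the route to $(ii)$ is short, but as written your proof of $(ii)$ does not close.
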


\begin{proof}
We assume that $G$ satisfies Hypothesis $A$. We prove $(i)$ by showing the existence of an automorphism of order $p$ that fixes $G^p\gamma_3(G)$ and $G/\Omega_1(Z_2(G))$ elementwise, but not $\Phi(G)$, and we prove $(ii)$ by using the conditions of Hypothesis $A$ and by $(i)$. 
\begin{itemize}
\item [$(i)$] By Hypothesis $A$, $G$ is not powerful. Hence, by Theorem \ref{th:2.5}, $G$ has a normal subgroup $G^p\gamma_3(G)\le N\lneq \Phi(G)$ such that $G/N=U/N\times V/N$, where $V/N$ is an extra-special group and $U/N\le Z(G/N)$ is elementary abelian. Furthermore, we assume $\{x_1, y_1, \dots, x_n, y_n, x_{2n+1}, \dots, x_{d(G)}\}$, $d(G)\ge 2n$, is a minimal generating set for $G$ such that $V/N=\langle \bar{x_1}, \dots, \bar{y_n}\rangle$ and $U/N=\langle \overline{x_{2n+1}}\rangle \times \cdots \times \langle \overline{x_{d(G)}}\rangle$. Since $N\le \Phi(G)$ and $\Omega_1(Z_2(G))\le Z(\Phi(G))$, we obtain $[\Omega_1(Z_2(G)), N]=1$, and $\Omega_1(Z_2(G))\le G^p\gamma_3(G)\le N$ by Lemma \ref{L:4.1}. Thus $\Omega_1(Z_2(G))\le Z(N)$ which is a $G/N$-module. Set $M=C_{\Omega_1(Z_2(G))}(U)\le Z(N)$. Since $U/N\le Z(G/N)$, $U/N\unlhd G/N$, and $U\unlhd G$. Thus $C_G(U)\unlhd G$, and hence $M=C_G(U)\cap \Omega_1(Z_2(G))\unlhd G$. Therefore, $M$ is a $G/N$-module. Next we check that the conditions of Lemma \ref{L:2.2} holds when $M$ is considered as a $V/N$-module. Since $U/N$ acts trivially on $M$, we obtain $M^{V/N}=M^{G/N}=\Omega_1(Z(G))$. In particular, $|M^{V/N}|=p$. Now we look for a comparison of $d(M)$ and $d(V/N)$. If $G/N=V/N$, then $M=\Omega_1(Z_2(G))$, and we deduce that $d\big(\Omega_1(Z_2(G))\big)\ge d(G)= d(G/N)$ by \eqref{eq:3.11.2}. Now let $G/N\gneq V/N$. Consider the map $\sigma:\Omega_1(Z_2(G)) \rightarrow \Omega_1(Z(G))^{d(G)-2n}$ given by
\begin{align*}
\sigma(a)=\big([x_{2n+1}, a], \ldots, [x_{d(G)}, a]\big).
\end{align*}
\noindent Let $x\in G$. By Corollary \ref{cr:3.2}, $[x, a]^p=1$ for all $a\in \Omega_1(Z_2(G))$, and expanding $[x, ab]$ using \eqref{eq:4.1.2}, we obtain that $a\mapsto [x, a]$ defines a homomorphism $\Omega_1(Z_2(G))\rightarrow \Omega_1(Z(G))$. Thus $\sigma$ is a homomorphism and $\im(\sigma)\le \Omega_1(Z(G))^{d(G)-2n}$. Note that $\Ker(\sigma)=M$. Hence $|\Omega_1(Z_2(G))|=|M||\im(\sigma)|\le |M|p^{d(G)-2n}$, and $p^{d(G)}\le |\Omega_1(Z_2(G))|$ by \eqref{eq:3.11.2}, so that $p^{2n}\le |M|$. Thus $2n=d(V/N)\le d(M)$, because $M$ is elementary abelian. Moreover, $2n\le d(M)$ implies that $M\not\le \Omega_1(Z(G))$, so we get $[M, V/N]=\Omega_1(Z(G))=M^{V/N}$. Therefore, applying Lemma \ref{L:2.2} with $G=V/N$, we obtain a derivation $\delta\in Z^1(V/N, M)$ with $\delta([x_1, y_1])\neq 1$, and $\delta$ has order $p$. If $G/N=V/N$, then $\delta\in Z^1(G/N, M)$, otherwise the extension $\delta'\in Z^1(G/N, M)$ of $\delta$ that corresponds to $(1_{U/N}, \delta)\in \Hom(U/N, M)\times Z^1(V/N, M)$ (See \cite[Lemma\ 1.2]{CG06}) will have order $p$, and satisfies $\delta'([x_1, y_1])\neq 1$, and we denote $\delta'$ with $\delta$ in the latter case. Let $\alpha= \varphi(\delta)\in C_{\Aut(G)}(G/N, N)$. Then $\alpha$ has order $p$, and fixes $G^p\gamma_3(G)\le N$ elementwise. Hence $\alpha=i_u$ is an inner automorphism of $G$. It now follows that $u\in C_G(G^p\gamma_3(G))\cap Z_3(G)$, and $[x_1, y_1, u]= \delta([x_1, y_1])\ne 1$ by \eqref{eq:3.3.1}, so that $u\not\in C_G(\Phi(G))= Z(\Phi(G))$. 
 
\item [$(ii)$] Let $|G|=p^n$. By Hypothesis $A$, $n-r\ge 4$. First note that $\frac{G}{Z_{n-r-1}(G)}$ is not cyclic, and so $\bigg|\frac{G}{Z_{n-r-1}(G)}\bigg|\ge p^2$, and $\bigg|\frac{Z_{n-r-1}(G)}{Z_3(G)}\bigg|\ge p^{n-r-4}$. Now we obtain a lower bound for $|Z_3(G)/Z(G)|$. Since $Z(G)$ is cyclic, we get $d\big(\Omega_1(Z_2(G))\big)\ge d(G)$ by \eqref{eq:3.11.2}, and using this in \eqref{eq:3.11.3} yields
\begin{align}\label{eq:4.2.1}
d\bigg(\frac{Z(\Phi(G))\cap Z_3(G)}{Z(G)}\bigg)\ge d(G)^2-{{d(G)}\choose{2}}= {{d(G)+1}\choose{2}}.
\end{align}
\noindent Furthermore, $Z_3(G)\not\le Z(\Phi(G))$ by $(i)$ so that $\bigg|\frac{Z_3(G)}{Z(\Phi(G))\cap Z_3(G)}\bigg|\ge p$. Hence $\bigg|\frac{Z_3(G)}{Z(G)}\bigg|\ge p^{{{d(G)+1}\choose{2}}+1}$ by \eqref{eq:4.2.1}. Thus
 \begin{align*}
 p^n=&\big|Z(G)\big| \bigg|\frac{Z_3(G)}{Z(G)}\bigg| \bigg|\frac{Z_{n-r-1}(G)}{Z_3(G)}\bigg| \bigg|\frac{G}{Z_{n-r-1}(G)}\bigg|\\
 \ge&p\ p^{{{d(G)+1}\choose{2}}+1}\ p^{n-r-4}\ p^2,
 \end{align*}
\noindent yielding ${{d(G)+1}\choose{2}}\le r$.
\end{itemize}
\end{proof}

Let $G$ be a finite nonabelian $p$-group, $p\ge 3$. Suppose that $Z(G)$ is cyclic and all the automorphisms of $G$ of order $p$ leaving $G^p\gamma_3(G)$ elementwise fixed are inner, then 
\begin{equation}\label{eq:4.2.2}
C_G(G^p\gamma_3(G))\cap Z_3(G)\not\leq Z(\Phi(G)),
\end{equation}
\noindent In next definition we write this as a hypothesis.

\begin{definition}
A finite nonabelian $p$-group $G$ of odd order satisfies Hypothesis $B$, if
\begin{itemize}
\item [$(i)$] Hypothesis $A$ holds for $G$ and
\item [$(ii)$] either $Z(G)$ is not cyclic, or \eqref{eq:4.2.2} holds for $G$.
\end{itemize}
\end{definition}
 
The homomorphisms like $\sigma$ considered in the proof of Theorem \ref{th:4.2} appear several times in the next section. To avoid the repetition of arguments, we will record the following lemma.

\begin{lemma}\label{L:4.4}
Let $G$ be a finite $p$-group, $N\unlhd G$, and let $H$ be a group with a homomorphism $\chi:H\rightarrow G/N$. Let $M\le Z(N)\cap Z_n(G)$ be a normal abelian subgroup of $G$ considered as a $H$-module via $\chi$. Let $x'_1, \ldots, x'_l\in H$, then the map $\lambda:M\rightarrow Z(N)\cap Z_{n-w+1}(G)$ given by $\lambda(a)=c(x'_1, \ldots, x'_l, a)$ is a homomorphism, where $c$ is a commutator of weight $w\ge 2$ in $\{x'_1, \ldots, x'_l, a\}$ and of weight $1$ in $a$. Furthermore, if $n\le p$, then the image of $M^*$ under $\lambda$ is contained in $\Omega_1(Z(N)\cap Z_{n-w+1}(G))$, where $M^*=\{a\in M| a^p\in Z(G)\}$. 
\end{lemma}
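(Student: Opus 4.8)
The statement has two parts: that $\lambda$ is a homomorphism, and that (when $n\le p$) the image of $M^*$ under $\lambda$ lands in $\Omega_1(Z(N)\cap Z_{n-w+1}(G))$. My plan is to handle these separately, with the first part following from the standard multilinearity of commutators modulo deeper terms, and the second part reducing to a $p$-th power computation via Corollary \ref{cr:3.2} and Mann's Lemma \ref{L:3.1}.

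For the homomorphism claim, first I would verify that $\lambda$ is well-defined by checking the codomain: since $M\le Z_n(G)$ and $c$ has weight $w$ in the generators with $a$ appearing once, iterating the inclusion $[Z_k(G),G]\le Z_{k-1}(G)$ shows $\lambda(a)=c(x'_1,\dots,x'_l,a)\in Z_{n-w+1}(G)$; and since $M\le Z(N)\unlhd G$ with the $x'_i$ acting through $G$, the value stays in $Z(N)$. The key step is additivity. Because $a$ occurs with weight exactly $1$ in $c$, I can write $c(x'_1,\dots,x'_l,a)$ as an iterated commutator in which $a$ sits in a single fixed slot, and then expand $c(\dots,ab)$ using the identities \eqref{eq:4.1.1} and \eqref{eq:4.1.2}. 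The point is that $M\le Z_n(G)$ is central modulo $Z_{n-1}(G)$, so all the correction terms $[\,\cdot\,,a,b]$ that appear in those expansions are commutators in which $a$ and $b$ each contribute, pushing the weight in $M$ up to $2$ and hence landing in an even deeper center where they vanish relative to the target. I would argue by induction on the depth of the commutator bracket, peeling off the outermost bracket and using that $a\mapsto [u,a]$ (for the innermost $u$) is a homomorphism $M\to \Omega_1(Z(N)\cap Z_{n-1}(G))$ exactly as in the $\sigma$ computation of Theorem \ref{th:4.2}, then feeding this back into the next bracket layer.

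For the second claim, take $a\in M^*$, so $a^p\in Z(G)$. The target $\lambda(a)=c(x'_1,\dots,x'_l,a)$ lies in $Z(N)\cap Z_{n-w+1}(G)\le Z_p(G)$ since $n\le p$. To show $\lambda(a)\in\Omega_1$, I must show $\lambda(a)^p=1$. Since $\lambda$ is a homomorphism, $\lambda(a)^p=\lambda(a^p)$, and $a^p\in Z(G)$ forces every commutator of weight $\ge 1$ in $a^p$ to be trivial, giving $\lambda(a^p)=1$ directly. Alternatively, and more in the spirit of the ambient machinery, I would observe that the innermost commutator $[u,a]$ satisfies $[u,a]^p=1$ by Corollary \ref{cr:3.2} (using $a^p\in Z(G)$ and $a\in Z_p(G)$), and this is inherited by the outer brackets since each outer layer is again a homomorphism in the relevant slot.

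The main obstacle I anticipate is the bookkeeping in the additivity argument: one must be careful that the correction terms in \eqref{eq:4.1.1}–\eqref{eq:4.1.2} genuinely lie in a deep enough center to be discarded, which hinges on tracking that $M\le Z_n(G)$ is used at every bracket layer and that the single occurrence of $a$ is what prevents the weight-in-$M$ from inflating uncontrollably. Getting the induction set up so that the codomain $Z_{n-w+1}(G)$ decreases correctly by one with each nested bracket is the delicate part; once that scaffolding is in place, both the homomorphism property and the $\Omega_1$ containment follow cleanly from the already-established corollaries.
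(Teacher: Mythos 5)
Your overall skeleton is the same as the paper's (terse) proof: pass from $x'_k$ to representatives $x_k\in G$ with $\chi(x'_k)=x_kN$ (legitimate because $M\le Z(N)$ makes the action well defined), check the codomain by iterating $[Z_k(G),G]\le Z_{k-1}(G)$, establish additivity by expanding $c(x'_1,\ldots,x'_l,ab)$ via \eqref{eq:4.1.1}--\eqref{eq:4.1.2}, and get the $\Omega_1$ statement from Corollary \ref{cr:3.2}. Your direct argument for the second part, $\lambda(a)^p=\lambda(a^p)=c(x_1,\ldots,x_l,a^p)=1$ since $a^p\in Z(G)$, is correct once $\lambda$ is known to be a homomorphism (and is in fact slightly cleaner than the paper's route through Corollary \ref{cr:3.2}, since it does not use $n\le p$).

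However, your justification of additivity has a genuine gap. You claim the correction terms $[\,\cdot\,,a,b]$ vanish because they have weight $2$ in $M$ and hence land ``in an even deeper center where they vanish relative to the target.'' Having two entries in $Z_n(G)$ only pushes such a term one step further down the upper central series --- e.g.\ $[u,a,b]\in Z_{n-2}(G)$ when $[u,a]\in Z_{n-1}(G)$ and $b\in Z_n(G)$ --- it does not make it trivial, and a map that is a homomorphism only modulo a deeper center is useless here: the applications in Section $5$ compute $|\Ker(\tau)|=|M|^2/|\im(\tau)|$, which requires $\lambda$ to be a homomorphism on the nose. Your stated template, the $\sigma$-computation in Theorem \ref{th:4.2}, works only because there $a\in\Omega_1(Z_2(G))$, so $[x,a]\in Z(G)$ and the correction $[x,a,b]$ dies by genuine centrality; this is special to $n=2$ and does not iterate to higher $n$. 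The actual mechanism --- and the reason the hypotheses say ``normal abelian subgroup'' --- is different: normality of $M$ gives $[u,a]=(a^{-1})^u a\in M$ for every $u\in G$, so at every bracket layer both entries of each correction commutator lie in $M$, and abelianness of $M$ makes them exactly $1$. With that substitution your layer-by-layer induction goes through (note also that each layer sends $M\to M\cap Z_{n-k}(G)$; it lands in $\Omega_1$ only on $M^*$, a small conflation of the two parts in your write-up), and the rest of your argument is sound.
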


\begin{proof}
We have that $c(x'_1, \ldots, x'_l, a)= c( x_1, \dots, x_l, a)$, where $x_k\in G$ satisfy $\chi(x'_k)= x_kN$ for all $k\in \{1, \ldots, l\}$. Thus $\im(\lambda)\le Z(N)\cap Z_{n-w+1}(G)$, and expanding $c(x'_1, \ldots, x'_l, ab)$ for all $a, b\in M$, we deduce that $\lambda$ is a homomorphism. Furthermore, if $n\le p$, then $M\le Z_p(G)$ so that $\lambda(M^*)\le \Omega_1(Z(N))$ by Corollary \ref{cr:3.2}. Thus we obtain $\lambda(M^*)\le \Omega_1(Z(N)\cap Z_{n-w+1}(G))$. 
\end{proof}

The following technical lemma will be useful in Section $5$.
\begin{lemma}\label{L:4.5}
Let $G$ and $H$ be two groups, and let $N\unlhd G$. Let $\chi: H\rightarrow G/N$ be a homomorphism. Let $M\le Z(N)\cap Z_4(G)$ be a normal abelian subgroup of $G$ considered as a $H$-module via $\chi$ and let $\delta:H\rightarrow M$ be a derivation. Let $x', y', z', w'\in H$ and let $x, y, z, w\in G$ such that $\chi(x')=xN, \chi(y')=yN, \chi(z')=zN, \chi(w')=wN$. If $\delta(x')=a_1$, $\delta(y')=a_2$, and $\delta(z')=a_3$, then we have the following.
\begin{itemize}
\item [$(i)$] $\delta([y', x'])=[a_2, x] [y, a_1] [y, x, a_2][y, x, a_1] [y, x, [a_1, y]]$.
\item [$(ii)$] $\delta([y', x', z'])= [a_2, x, z] [y, a_1, z] [y, x, a_2, z] [y, x, a_1, z] [y, x, a_3] [y, x, z, a_3]$.
\item [$(iii)$] if $M\le C_G(\gamma_3(G))$, then $\delta([y', x', z', w'])= [\delta([y', x', z']), w]$.
\item [$(iv)$] if $G$ is a $p$-group, $p\ge 5$, and $a_1^p=1$, then $\delta((x')^p)=1$.
\item [$(v)$] if $G$ is a $3$-group, $a_1\in Z_3(G)$ such that $a_1^3=1$, and $[a_1, x, x]=1$, then $\delta((x')^3)=1$.
\end{itemize}
\end{lemma}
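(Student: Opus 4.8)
The unifying idea is that, because $M$ is abelian and $M\le Z(N)\cap Z_4(G)$, evaluating $\delta$ on any word reduces to expanding a product of $\chi$-twisted conjugates of the values $a_1,a_2,a_3$ and then truncating. For $m\in M$ and $g\in G$ one has $m^{g}=m[m,g]$, and iterating this produces commutators $[m,g_1,\dots,g_k]$ that die as soon as $k\ge 4$, since $M\le Z_4(G)$. Two facts keep the bookkeeping finite: first, $M$ abelian forces $[m,m']=1$ for all $m,m'\in M$, so every cross-term between two $M$-valued commutators vanishes; second, the standard filtration estimate $[Z_i(G),\gamma_j(G)]\le Z_{i-j}(G)$ (a three-subgroups-lemma induction) controls how far a commutator of an $M$-element with a $\gamma_j$-element can be pushed. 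Throughout I use $[g,h]^{-1}=[h,g]$ and the expansions \eqref{eq:4.1.1}, \eqref{eq:4.1.2}, and the general bracket rule obtained by applying $\delta$ to $u'v'=v'u'[u',v']$, namely $\delta([u',v'])=(\delta(v')^{u[u,v]})^{-1}(\delta(u')^{[u,v]})^{-1}\delta(u')^{v}\delta(v')$, where $\chi(u')=uN$, $\chi(v')=vN$.

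For $(i)$, specializing this rule to $u'=y'$, $v'=x'$ gives
\[
\delta([y',x'])=(a_1^{\,y[y,x]})^{-1}(a_2^{\,[y,x]})^{-1}a_2^{x}a_1 .
\]
Expanding the three conjugations via $m^{g}=m[m,g]$ to the order permitted by $M\le Z_4(G)$, the factors $a_1^{-1}$ and $a_2^{-1}$ cancel $a_1$ and $a_2$, and after rewriting $[a_1,y]^{-1}=[y,a_1]$ one is left with exactly the five commutators in the statement. Part $(ii)$ comes from feeding $w:=\delta([y',x'])$ from $(i)$ into the same bracket rule with first entry $[y',x']$ (so its $\chi$-image is $u=[y,x]$) and second entry $z'$. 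Here two simplifications are decisive: $w^{[y,x,z]}=w$, because each factor of $w$ lies in some $Z_k(G)$ with $k\le 3$ and $[Z_k(G),\gamma_3(G)]\le Z_{k-3}(G)=1$; and the contribution that the $[y,x,[a_1,y]]$ factor of $w$ would make vanishes, since $[y,x,[a_1,y]]\in[\gamma_2(G),Z_3(G)]\le Z(G)$ and hence commutes with $z$. Expanding $w^{z}=w[w,z]$ with $[w,z]=\prod_f[f,z]$ (cross-terms again dying as $M$ is abelian) together with the $a_3$-conjugation then leaves precisely the six listed commutators.

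For $(iii)$ the hypothesis $M\le C_G(\gamma_3(G))$ does the work: in the bracket rule for $[[y',x',z'],w']$ the first entry has $\chi$-image $u=[y,x,z]\in\gamma_3(G)$ and $[u,w]\in\gamma_4(G)\le\gamma_3(G)$, so both act trivially on $M$; the $\delta(w')$ contributions cancel and only $\delta([y',x',z'])^{w}=\delta([y',x',z'])\,[\delta([y',x',z']),w]$ survives, giving $[\delta([y',x',z']),w]$. For $(iv)$ I expand $\delta((x')^p)=\prod_{i=0}^{p-1}a_1^{x^{i}}$ and prove by induction (Pascal's rule, all factors in the abelian $M$) the collection formula $a_1^{x^{i}}=\prod_{k\ge 0}[a_1,{}_k x]^{\binom{i}{k}}$, where $[a_1,{}_k x]$ denotes the commutator of $a_1$ with $k$ copies of $x$; this truncates at $k=3$ since $a_1\in Z_4(G)$. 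Summing over $i$ with $\sum_{i=0}^{p-1}\binom{i}{k}=\binom{p}{k+1}$ yields
\[
\delta((x')^p)=a_1^{p}\,[a_1,x]^{\binom{p}{2}}[a_1,x,x]^{\binom{p}{3}}[a_1,x,x,x]^{\binom{p}{4}} .
\]
Now $a_1^{p}=1$ by hypothesis; each $[a_1,{}_k x]$ lies in $Z_{4-k}(G)\le Z_p(G)$ and has $p$-th power $1$ by repeated application of Corollary \ref{cr:3.2} (starting from $a_1^{p}=1$); and $\binom{p}{2},\binom{p}{3},\binom{p}{4}$ are all divisible by $p$ once $p\ge 5$. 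Hence every factor is trivial. Part $(v)$ is the same computation with $p=3$: then $a_1\in Z_3(G)$ truncates the collection at $k=2$, the exponents become $\binom{3}{2}=3$ and $\binom{3}{3}=1$, so $\delta((x')^3)=a_1^{3}[a_1,x]^{3}[a_1,x,x]$; the first two factors vanish as before and the last is killed precisely by the extra hypothesis $[a_1,x,x]=1$, which is exactly why $p=3$ needs a separate assumption.

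The main obstacle is the bookkeeping in $(i)$ and $(ii)$, i.e.\ determining which correction terms survive the truncation. The genuinely delicate point is the weight-three term $[y,x,[a_1,y]]$: it is really present in $(i)$ but must be recognized as central in $(ii)$, so that a further commutator with $z$ annihilates it. The estimate $[Z_i(G),\gamma_j(G)]\le Z_{i-j}(G)$, combined with $M$ abelian, is what makes this cancellation and all the others transparent.
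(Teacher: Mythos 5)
Your proposal is correct and follows essentially the same route as the paper: you derive the same bracket rule by applying $\delta$ to $y'x'=x'y'[y',x']$, bootstrap $(ii)$ and $(iii)$ from $(i)$ with the same truncations (including the key observation that $[y,x,[a_1,y]]$ is central, hence dies against $z$), and prove $(iv)$--$(v)$ via the same expansion $\delta((x')^p)=a_1^p[a_1,x]^{\binom{p}{2}}[a_1,x,x]^{\binom{p}{3}}[a_1,x,x,x]^{\binom{p}{4}}$ killed by Corollary \ref{cr:3.2} and $p\mid\binom{p}{i}$. The only differences are cosmetic: you make the filtration estimate $[Z_i(G),\gamma_j(G)]\le Z_{i-j}(G)$ and the hockey-stick induction explicit where the paper leaves them implicit.
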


\begin{proof}
For $a\in M$ and $h\in H$ with $\chi(h)=gN$, we have $a^h= g^{-1}ag$. Thus applying $\delta$ to the identity $y'x'=x'y'[y', x']$, we obtain
\begin{equation*}
a_2^xa_1= a_1^{y[y, x]} a_2^{[y, x]} \delta([y', x']).
\end{equation*}
\noindent Writing $a_1^{y[y, x]}= a_1[a_1, y[y, x]]$, and expanding $[a_1, y[y, x]]$ by \eqref{eq:4.1.2}, we get 
\begin{equation*}
a_2[a_2, x]a_1= a_1[a_1, [y, x]] [a_1, y] [a_1, y, [y, x]] a_2[a_2, [y, x]] \delta([y', x']),
\end{equation*}
\noindent which yields $(i)$. Next to prove $(ii)$, we apply $(i)$ with $x'= z'$ and $y'= [y', x']$. Since $M\le Z_4(G)$, we get $[y, x, z, [a_3, [y, x]]]=1$, and since $\delta([y', x'])\in Z_3(G)$ by $(i)$, we get $[y, x, z, \delta([y', x'])]=1$. Thus 
\begin{equation*}
\delta([y', x', z'])= [\delta([y', x']), z] [y, x, a_3] [y, x, z, a_3].
\end{equation*}
\noindent Using $(i)$ in $[\delta([y', x']), z]$ yields $[[a_2, x] [y, a_1] [y, x, a_2] [y, x, a_1] [y, x, [a_1, y]], z]$, and expanding this by the repeated use of \eqref{eq:4.1.1} gives $(ii)$. Similarly to prove $(iii)$, we apply $(i)$ with $x'=w'$ and $y'=[y', x', z']$. Since $M\le Z_4(G)$, we obtain $\delta([y', x', z', w']= [\delta([y', x', z']), w] [y, x, z, \delta(w')]$, in which $[y, x, z, \delta(w')]=1$ since $[\gamma_3(G), M]=1$ by the assumption in $(iii)$. To prove $(iv)$ and $(v)$, let us first express $\delta((x')^p)$ as
\begin{align}\label{eq:4.5.1} 
\delta((x')^p)= a_1a_1^x\cdots a_1^{x^{p-1}}= a_1^p[a_1, x]^{{p}\choose{2}} [a_1, x, x]^{{p}\choose{3}} [a_1, x, x, x]^{{p}\choose{4}}.
\end{align}
\noindent When $p\ge 5$, since $a_1^p=1$ and $a_1\in Z_4(G)\le Z_p(G)$, we obtain $[a_1, x]^p=[a_1, x, x]^p=[a_1, x, x, x]^p=1$ by Corollary \ref{cr:3.2}. Moreover, $p$ divides ${{p}\choose{i}}$, $i=2, 3, 4$, hence we get $\delta((x')^p)=1$ by \eqref{eq:4.5.1}. In order to prove $(v)$, we deduce that $\delta((x')^3)= a_1^3[a_1, x]^3$ by \eqref{eq:4.5.1}. Since $a_1^3=1$ and $a_1\in Z_3(G)$, we get $[a_1, x]^3=1$ by Corollary \ref{cr:3.2}. This yields $\delta((x')^3)=1$.
\end{proof}

\section{Existence of a non-inner automorphism of order $p$ in finite $p$-groups of coclass $4$ and $5$}
In this section we prove the conjecture for finite nonabelian $p$-groups of coclass $4$ and $5$, $p\ge 5$. Suppose that $G$ is a finite nonabelian $p$-group of order $p^n$, class $c$, and coclass $r$. Since $\big|\frac{G}{Z_{c-1}(G)}\big|\ge p^2$, and $\big|\frac{Z_{c-1}(G)}{Z_i(G)}\big|\ge p^{c-1-i}$, we have $\big|\frac{G}{Z_i(G)}\big|\ge p^{c+1-i}$, and thus $|Z_i(G)|\le p^{i+r-1}$ for all $i\in \{1, \ldots, c-1\}$. In particular, for finite $p$-groups of coclass $4$ and $5$, we have $|Z_i(G)|\le p^{i+4}$ for all $i\in\{1, \ldots, c-1\}$.

\begin{theorem}\label{th:5.1}
Let $p$ be an odd prime and let $G$ be a finite nonabelian $p$-group of class $c$ such that $|Z_i(G)|\le p^{i+4}$ for all $i\in \{1, \ldots, c-1\}$. Then $G$ admits a non-inner automorphism of order $p$ fixing $G^p\gamma_3(G)$ elementwise, if one of the following occurs:
\begin{itemize}
\item [$(i)$] $Z(G)$ is not cyclic.
\item [$(ii)$] $d(G)\ge 3$.
\item [$(iii)$] $|\Omega_1(Z_2(G))|\ge p^3$.
\end{itemize}
\end{theorem}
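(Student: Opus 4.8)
The plan is to argue by contradiction. Suppose $G$ admits no non-inner automorphism of order $p$ fixing $G^p\gamma_3(G)$ elementwise. Since an automorphism fixing $\Phi(G)$ elementwise a fortiori fixes $G^p\gamma_3(G)\le\Phi(G)$, this assumption (whatever the value of $d(G)$) places $G$ in the situation of the remark closing Section~3, so $G$ satisfies Hypothesis~$A$; in particular \eqref{eq:3.11.2} and \eqref{eq:3.11.3} hold and, by Hypothesis~$A(vi)$, $\Omega_1(Z_2(G))\le Z(\Phi(G))$ is elementary abelian. Write $d=d(G)$, $s=d\big(Z(G)\big)$, $t=d\big(\Omega_1(Z_2(G))\big)$, and set $Q=\frac{Z(\Phi(G))\cap Z_3(G)}{Z(G)}$, an elementary abelian group of rank $q$. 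The only numerical input I draw from the hypothesis is $|Z_3(G)|\le p^{3+4}=p^7$.

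First I would record a uniform upper bound $q\le 5$. If $Z(G)$ is not cyclic then $|Z(G)|\ge p^{s}$, whence $q\le\log_p|Z_3(G)|-s\le 7-s\le 5$. If $Z(G)$ is cyclic then the contradiction hypothesis lets me invoke Theorem~\ref{th:4.2}$(i)$, which gives $C_G(G^p\gamma_3(G))\cap Z_3(G)\not\le Z(\Phi(G))$; since $Z(\Phi(G))=C_G(\Phi(G))\le C_G(G^p\gamma_3(G))$ this forces the strict inclusion $Z(\Phi(G))\cap Z_3(G)\lneq C_G(G^p\gamma_3(G))\cap Z_3(G)\le Z_3(G)$, so $|Z(\Phi(G))\cap Z_3(G)|\le p^6$ and again $q\le 5$. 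For the lower bound I substitute $t\ge sd$ from \eqref{eq:3.11.2} into \eqref{eq:3.11.3} to get $q\ge td-s\binom{d}{2}\ge s\binom{d+1}{2}$. Cases $(i)$ and $(ii)$ now close at once: in $(i)$, $s\ge 2$ and $d\ge 2$ give $q\ge 2\binom{3}{2}=6>5$; in $(ii)$, after using $(i)$ to assume $Z(G)$ cyclic so that $q\le 5$ is available, $d\ge 3$ gives $q\ge\binom{4}{2}=6>5$. Either way the two bounds on $q$ collide.

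For case $(iii)$ I may assume, thanks to $(i)$ and $(ii)$, that $d=2$ and $Z(G)$ is cyclic, and here I switch to the $2$-generator refinements of Hypothesis~$A$ coming from Lemma~\ref{L:3.9}. By \eqref{eq:3.11.4} we have $Z(G^p\gamma_3(G))=C_G(G^p\gamma_3(G))$, so by Remark~\ref{R:3.10} the group $\frac{C_G(G^p\gamma_3(G))\cap Z_3(G)}{Z(G)}$ is elementary abelian; \eqref{eq:3.11.5} bounds its rank below by $2t\ge 6$, while $|Z_3(G)|\le p^7$ and $|Z(G)|\ge p$ bound it above by $6$. This pins the configuration to be tight: $t=3$, $|Z(G)|=p$, $C_G(G^p\gamma_3(G))\cap Z_3(G)=Z_3(G)$ of order $p^7$, with $Z_3(G)/Z(G)$ elementary abelian (forcing $Z_3(G)$ abelian and $Z_2(G)=\Omega_1(Z_2(G))$ elementary abelian of order $p^3$).

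I expect this last rigid case to be where all the difficulty lies, since pure counting cannot exclude it. To finish it I would exploit the forced abelian structure of $Z_3(G)$ together with Theorem~\ref{th:4.2}$(i)$, which supplies $u\in Z_3(G)$ with $[u,[x,y]]\neq 1$ (where $G=\langle x,y\rangle$). Because $u\in Z_3(G)$, each of $[x,[y,u]]$, $[y,[u,x]]$, $[u,[x,y]]$ lies in $[G,Z_2(G)]\le Z(G)$, so the Hall--Witt/Jacobi relation $[x,[y,u]]\,[y,[u,x]]\,[u,[x,y]]=1$ holds inside the order-$p$ group $Z(G)$; analysing it against the elementary abelian structure of $Z_2(G)$ and $Z_3(G)$ should either contradict $|Z_3(G)|\le p^7$ or, equivalently, let me manufacture an order-$p$ derivation via Lemma~\ref{L:4.5} that produces the desired non-inner automorphism. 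This tight situation is essentially the coclass-$5$ extremal case, and it is the genuine obstacle of the proof.
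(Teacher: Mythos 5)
Your cases $(i)$ and $(ii)$ are correct and are in substance the paper's own argument: feeding \eqref{eq:3.11.2} into \eqref{eq:3.11.3} to get $d\Big(\frac{Z(\Phi(G))\cap Z_3(G)}{Z(G)}\Big)\ge d(Z(G))\binom{d(G)+1}{2}$, and playing this against $|Z_3(G)|\le p^7$ (sharpened by one extra factor of $p$ via Theorem \ref{th:4.2}$(i)$ when $Z(G)$ is cyclic), is exactly the paper's counting. Case $(iii)$, however, contains a genuine gap. Your reduction to the rigid configuration ($d(G)=2$, $|Z(G)|=p$, $Z(G^p\gamma_3(G))\cap Z_3(G)=Z_3(G)$ of order $p^7$, $Z_3(G)$ abelian with $Z_3(G)/Z(G)$ elementary abelian) is sound, but the proposed finish is not an argument: the Hall--Witt/Jacobi congruence $[x,[y,u]]\,[y,[u,x]]\,[u,[x,y]]\equiv 1$ with $u\in Z_3(G)$ is an \emph{identity}, valid in every group, so it imposes no constraint whatever on $G$, and no analysis of it against the elementary abelian structure of $Z_2(G)$ and $Z_3(G)$ can exclude the configuration. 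The clause ``should either contradict $|Z_3(G)|\le p^7$ or \dots let me manufacture an order-$p$ derivation via Lemma \ref{L:4.5}'' is precisely the unfinished work: one must exhibit a supply of derivations in $Z^1(G/N,\cdot)$ for a suitable $N$ and then rule out that all of them are principal, and neither step is carried out.

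What actually closes case $(iii)$ in the paper is a count one level higher up the upper central series, using the hypothesis at $i=4$, namely $|Z_4(G)|\le p^8$ --- an input your proposal never invokes (you use only $|Z_3(G)|\le p^7$, which, as you yourself observe, cannot suffice). Concretely: $G/G^p\gamma_3(G)$ is extra-special of order $p^3$ and exponent $p$ with presentation \eqref{eq:5.1.3}; by Lemma \ref{L:4.4}, evaluating the relators as in Lemma \ref{L:4.5} defines a homomorphism $\tau:\Omega_1(Z(G^p\gamma_3(G))\cap Z_3(G))^2\rightarrow \Omega_1(Z(G))^4$ (the last two coordinates $[a,x,x]$ and $[b,y,y]$ are there to kill $\delta(x^p),\delta(y^p)$ when $p=3$), and each $(a,b)\in\Ker(\tau)$ yields a derivation $G/G^p\gamma_3(G)\rightarrow \Omega_1(Z(G^p\gamma_3(G))\cap Z_3(G))$. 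By \eqref{eq:3.11.6} the source has order at least $p^{12}$ while $|\im(\tau)|\le p^4$, so $|\Ker(\tau)|\ge p^8$; on the other hand Corollary \ref{cr:3.4} with $N=G^p\gamma_3(G)$ (legitimate since \eqref{eq:3.11.4} gives $C_G(N)=Z(N)$) bounds $|\Ker(\tau)|\le\Big|\frac{A^*\cap Z_4(G)}{Z(G)}\Big|$, forcing $|Z_4(G)|\ge p^9$ and contradicting $|Z_4(G)|\le p^8$. Note also that your closing remark misidentifies the obstacle: under the standing hypothesis $|Z_i(G)|\le p^{i+4}$ this tight case is dispatched by the level-$4$ count just described, and the genuinely hard residual configurations are deferred to Theorems \ref{th:5.2} and \ref{th:5.4}, not to Theorem \ref{th:5.1}.
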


\begin{proof}
Suppose that $G$ does not satisfy the conclusion of the theorem, then we can assume Hypothesis $B$ for $G$. Furthermore, since $|Z_2(G)|\le p^6$, we get $\bigg|\frac{Z_2^*(G)}{Z(G)}\bigg|\le p^5$, and hence we obtain \eqref{eq:5.1.1} by \eqref{eq:3.11.1}: 
\begin{equation}\label{eq:5.1.1}
d(G)d\big(Z(G)\big)\le 5.
\end{equation}
\begin{itemize}
\item [$(i)$] Since $G$ is nonabelian, $d(G)\ge 2$. Thus, if $Z(G)$ is not cyclic, then we get $d(G)= d\big(Z(G)\big)= 2$ by \eqref{eq:5.1.1}. Using $d(G)=d\big(Z(G)\big)=2$ in \eqref{eq:3.11.2} yields $d\big(\Omega_1(Z_2(G))\big)\ge 4$, and using this in \eqref{eq:3.11.3} gives that $d\bigg(\frac{Z(\Phi(G))\cap Z_3(G)}{Z(G)}\bigg)\ge 6$. Thus $|Z( \Phi(G))\cap Z_3(G)|\ge p^6|Z(G)|\ge p^8$, where we get the second inequality since $d\big(Z(G)\big)=2$. This is a contradiction to $|Z_3(G)|\le p^7$, whence the proof.

\item [$(ii)$] We now assume $Z(G)$ is cyclic by $(i)$. Then using that $\bigg|\frac{Z_3(G)}{Z(G)}\bigg|\ge p^{{{d(G)+1}\choose{2}}+1}$, which we obtained in the proof of Theorem \ref{th:4.2} $(ii)$, we get $\bigg|\frac{Z_3(G)}{Z(G)}\bigg|\ge p^7$. This yields $|Z_3(G)|\ge p^8$, a contradiction to $|Z_3(G)|\le p^7$.

\item [$(iii)$] We now assume that $Z(G)$ is cyclic and $d(G)=2$ by $(i)$ and $(ii)$. Since $\Omega_1(Z_2(G))$ is elementary abelian, $|\Omega_1(Z_2(G))|\ge p^3$ implies that $d\big(\Omega_1(Z_2(G))\big)\ge 3$, and thus we obtain \eqref{eq:5.1.2} by \eqref{eq:3.11.6}:
\begin{equation}\label{eq:5.1.2}
d\big(\Omega_1(Z(G^p\gamma_3(G)\big)\cap Z_3(G)))\ge 6.
\end{equation}
\noindent In the next few lines, we aim to find the isomorphism class of $G/G^p\gamma_3(G)$. First note that $G/G^p\gamma_3(G)$ is a nonabelian group. Otherwise, we have $\gamma_2(G)\le G^p\gamma_3(G)$, which yields that $G$ is powerful by Lemma \ref{L:2.4}. Now let $H=\langle h_1,h_2\rangle$ be a finite $p$-group of class $2$ and of exponent $p$. Then it follows that $\gamma_2(H)=\langle [h_1, h_2]\rangle$, and since $\exp(H)=p$, we get $|\gamma_2(H)|=p$. Furthermore, since the class of $H$ is $2$, we have $\gamma_2(H)\le Z(H)$, and since $\exp(H)=p$, we have $\gamma_2(H)=\Phi(H)$, and so we get $[H: Z(H)]\le [H:\gamma_2(H)]=p^2$. Note that $H/Z(H)$ is not cyclic. Thus $[H: Z(H)]=p^2$, and we get $\gamma_2(H)=Z(H)$. Therefore, $H$ is an extra-special group of order $p^3$ and of exponent $p$. Taking $H=G/G^p\gamma_3(G)$ in the above discussion, we obtain the below presentation for $G/G^p\gamma_3(G)$: 
\begin{equation}\label{eq:5.1.3}
\langle x, y\ |\ x^3, y^3, [y, x, x], [y, x, y]\rangle.
\end{equation}
\noindent We now proceed to give a family of derivations from $G/G^p\gamma_3(G)\rightarrow \Omega_1(Z(G^p\gamma_3(G))\cap Z_3(G))$. Let $F$ be a free group on $\{x, y\}$. Then $\Omega_1(Z(G^p\gamma_3(G))\cap Z_3(G))$ is an $F$-module. By taking $H=F$ and $N=G^p\gamma_3(G)$ in Lemma \ref{L:4.4}, we see that the map
\begin{align*}
\tau: \Omega_1(&Z(G^p\gamma_3(G))\cap Z_3(G))^2\rightarrow \Omega_1(Z(G))^4\\
(a, b)\mapsto& \big([b, x, x][y, a, x][y, x, a],\ [b, x, y][y, a, y][y, x, b],\ [a, x, x],\ [b, y, y]\big)
\end{align*}
\noindent is a homomorphism. Let $(a, b)\in \Ker(\tau)$. By Lemma \ref{L:2.1} $(i)$, the assignment $x\mapsto a$, $y\mapsto b$ extends to a derivation $\delta_{a, b}$ of $F$. We now check that $\delta_{a, b}$ preserves the relations in \eqref{eq:5.1.3}. By Lemma \ref{L:4.5} $(ii)$, we have
\begin{align*}
\delta_{a, b}([y, x, x])= &[b, x, x][y, a, x][y, x, a]\ \text{and}\\
\delta_{a, b}([y, x, y])= &[b, x, y][y, a, y][y, x, b].
\end{align*}
\noindent Since $(a, b)\in \Ker(\tau)$, we get $\delta_{a, b}([y, x, x])=\delta_{a, b}([y, x, y])=1$. Similarly we get $\delta_{a, b}(x^p)=\delta_{a, b}(y^p)=1$ by Lemma \ref{L:4.5} $(iv)$ and $(v)$. Hence, by Lemma \ref{L:2.1} $(ii)$, $\delta_{a, b}$ induces a unique derivation from $G/G^p\gamma_3(G)\rightarrow \Omega_1(Z(G^p\gamma_3(G))\cap Z_3(G))$, and so
\begin{equation*}
|\Ker(\tau)|\le |Z^1(G/G^p\gamma_3(G), \Omega_1(Z(G^p\gamma_3(G))\cap Z_3(G)))|.
\end{equation*}
Since $d(G)=2$, we have $C_G(G^p\gamma_3(G))= Z(G^p\gamma_3(G))$ by \eqref{eq:3.11.4}. Therefore, applying Corollary \ref{cr:3.4} with $N= G^p\gamma_3(G)$, we obtain
\begin{align}\label{eq:5.1.4}
|\Ker (\tau)|\le \bigg|\frac{A^*\cap Z_4(G)}{Z(G)}\bigg|,
\end{align}
\noindent where $A^*= \{a\in Z(G^p\gamma_3(G))| a^p\in Z(G)\}$. Now we will find a lower bound for $|\Ker(\tau)|$. Since $|\Omega_1(Z(G^p\gamma_3(G))\cap Z_3(G))|\ge p^6$ by \eqref{eq:5.1.2}, and $|\im(\tau)|\le p^4$, we have
\begin{equation*}
|\Ker(\tau)|= \frac{|\Omega_1(Z(G^p\gamma_3(G))\cap Z_3(G))|^2}{|\im(\tau)|}\ge p^8.
\end{equation*}
\noindent This yields $|A^*\cap Z_4(G)|\ge p^8|Z(G)|\ge p^9$ by \eqref{eq:5.1.4}, a contradiction to $|Z_4(G)|\le p^8$, whence the proof.
\end{itemize}
\end{proof}

\begin{theorem}\label{th:5.2}
Let $p\ge 5$ and let $G$ be a finite nonabelian $p$-group of class $c$ such that $|Z_i(G)|\le p^{i+4}$ for all $i\in \{1, \ldots, c-1\}$. Then $G$ admits a non-inner automorphism of order $p$ fixing $G^p\gamma_3(G)$ elementwise, if either of the following occurs:
\begin{itemize}
\item [$(i)$] $\bigg|\frac{Z(G^p\gamma_3(G))\cap Z_4(G)}{Z(G)}\bigg|\le p^5$.
\item [$(ii)$] $\bigg|\frac{Z(G^p\gamma_3(G))\cap Z_5(G)}{Z(G)}\bigg|\le p^7$.
\end{itemize}
\end{theorem}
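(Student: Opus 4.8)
The plan is to argue by contradiction: suppose $G$ has no non-inner automorphism of order $p$ fixing $N=G^p\gamma_3(G)$ elementwise, so that $G$ satisfies Hypothesis $B$, and show that each of (i) and (ii) is then violated. First I would invoke Theorem \ref{th:5.1}: since the desired automorphism is assumed not to exist, none of its three alternatives can hold, which forces $Z(G)$ cyclic, $d(G)=2$, and $|\Omega_1(Z_2(G))|\le p^2$; combined with \eqref{eq:3.11.2} this pins down $d(\Omega_1(Z_2(G)))=2$, so $|\Omega_1(Z_2(G))|=p^2$. By Hypothesis $A$ (via \eqref{eq:3.11.4}) we have $C_G(N)=Z(N)$, and, exactly as in the proof of Theorem \ref{th:5.1}$(iii)$, $G/N$ is extra-special of order $p^3$ and exponent $p$, with presentation $\langle x,y\mid x^p,y^p,[y,x,x],[y,x,y]\rangle$. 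Throughout I set $V_k=Z(N)\cap Z_k(G)$ and $W_k=\Omega_1(V_k)$.

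The engine of both parts is the counting device of Theorem \ref{th:5.1}$(iii)$. For a module $M=W_k\le Z(N)$, Lemma \ref{L:2.1}$(i)$ lets me assign $\delta(x)=a$, $\delta(y)=b$ for arbitrary $a,b\in M$, and by Lemma \ref{L:4.5}$(iv)$ (here $p\ge 5$ is essential) the relations $x^p,y^p$ are preserved automatically. Lemma \ref{L:4.5}$(ii)$ expresses $\delta([y,x,x])$ and $\delta([y,x,y])$ as explicit products of weight-three commutators in $x,y,a,b$, and by Lemma \ref{L:4.4} these assemble into a homomorphism $\tau:M^2\to W_{k-2}^2$ whose kernel is precisely the set of pairs $(a,b)$ yielding a genuine derivation $G/N\to M$. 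Since $(a,b)\mapsto\delta_{a,b}$ is injective, $|Z^1(G/N,M)|\ge|\ker\tau|\ge|M|^2/|W_{k-2}|^2$, while \eqref{eq:3.4.1} (applicable as $k\le 4\le p-1$ and $G^p\le N$) identifies $|Z^1(G/N,M)|$ with $|V_{k+1}/Z(G)|$.

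For part (i) I take $k=3$. As $W_3\le Z_3(G)$, the weight-four terms of Lemma \ref{L:4.5}$(ii)$ collapse (since $[\gamma_3(G),Z_3(G)]=1$ and $[Z(G),G]=1$), so $\tau$ lands in $W_1^2=\Omega_1(Z(G))^2$, of order $p^2$ because $Z(G)$ is cyclic. To bound $|W_3|$ below I combine \eqref{eq:4.2.1}, giving $d\big((Z(\Phi(G))\cap Z_3(G))/Z(G)\big)\ge 3$, with Theorem \ref{th:4.2}$(i)$ (available since Hypothesis $B$ and cyclicity of $Z(G)$ yield \eqref{eq:4.2.2}), which furnishes an element of $V_3$ outside $Z(\Phi(G))$; as $V_3/Z(G)$ is elementary abelian by Corollary \ref{cr:3.2}, its rank is $\ge 4$, whence $|W_3|\ge|V_3/Z(G)|\ge p^4$. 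Then $|V_4/Z(G)|=|Z^1(G/N,W_3)|\ge p^{8}/p^{2}=p^{6}$, contradicting (i). For part (ii) I take $k=4$; note the part-(i) counting (which does not itself use hypothesis (i)) already yields $|V_4/Z(G)|\ge p^6$, and since $V_4^p\le Z(G)$ this gives $|W_4|\ge|V_4/Z(G)|\ge p^6$. The decisive point is that the codomain $W_2^2$ is small: by Lemma \ref{L:4.1}, $\Omega_1(Z_2(G))\le Z(N)$, so $W_2=\Omega_1(Z(N)\cap Z_2(G))=\Omega_1(Z_2(G))$ has order $p^2$. Hence $|V_5/Z(G)|=|Z^1(G/N,W_4)|\ge p^{12}/p^{4}=p^{8}$, contradicting (ii).

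I expect the main obstacle to be the codomain control in part (ii): one must check that every value $\delta([y,x,x]),\delta([y,x,y])$ built from $a,b\in Z_4(G)$ genuinely lies in $\Omega_1(Z(N)\cap Z_2(G))$ rather than a larger section, which is where the identification $W_2=\Omega_1(Z_2(G))$ of order $p^2$ (via Lemma \ref{L:4.1} together with the established $d(\Omega_1(Z_2(G)))=2$) carries the argument; the parallel but easier collapse of higher-weight terms drives part (i). One must equally confirm that lying in $\ker\tau$ is not just necessary but sufficient for $\delta$ to descend to $G/N$, i.e. that no relation beyond $[y,x,x],[y,x,y]$ and the automatic $x^p,y^p$ needs verifying, which is exactly the content of applying Lemma \ref{L:4.5} to the presentation above.
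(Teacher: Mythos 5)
Your proposal is correct and follows essentially the same route as the paper: the same reduction via Theorem \ref{th:5.1} and Hypothesis $B$ to the case $Z(G)$ cyclic, $d(G)=2$, $|\Omega_1(Z_2(G))|=p^2$, $G/G^p\gamma_3(G)$ extra-special of order $p^3$ with presentation \eqref{eq:5.1.3}, followed by the same derivation count (Lemmas \ref{L:2.1}, \ref{L:4.4}, \ref{L:4.5} together with \eqref{eq:3.4.1} for $N=G^p\gamma_3(G)$) giving $|\Ker(\tau)|\ge p^{8}/p^{2}=p^{6}$ in part $(i)$ and $p^{12}/p^{4}=p^{8}$ in part $(ii)$, which are exactly the paper's maps $\tau_1$ and $\tau_2$ (your weight-four collapse and the identification $\Omega_1(Z(N)\cap Z_2(G))=\Omega_1(Z_2(G))$ via Lemma \ref{L:4.1} match the paper's codomain choices). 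The only cosmetic deviation is that you derive $|\Omega_1(Z(G^p\gamma_3(G))\cap Z_3(G))|\ge p^{4}$ from \eqref{eq:4.2.1} together with Theorem \ref{th:4.2}$(i)$, whereas the paper simply quotes \eqref{eq:3.11.6}; both are available under Hypothesis $B$ and yield the same bound.
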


\begin{proof} 
Suppose that all the automorphisms of $G$ of order $p$ fixing $G^p\gamma_3(G)$ elementwise are inner. Then we assume that Hypothesis $B$ holds for $G$. Furthermore, we assume that $Z(G)$ is cyclic, $d(G)=2$, and $|\Omega_1(Z_2(G))|\le p^2$ by Theorem \ref{th:5.1}. Thus we get $|\Omega_1(Z_2(G))|=p^2$ by \eqref{eq:3.11.2}. Moreover, $G/G^p\gamma_3(G)$ is an extra-special group of order $p^3$, exponent $p$, and has presentation \eqref{eq:5.1.3}. Let $F$ be a free group on $\{x, y\}$. It follows that $Z(G^p\gamma_3(G))$ is an $F$-module.
\begin{itemize}
\item [$(i)$] We now give a family of derivations from $G/G^p\gamma_3(G)\rightarrow \Omega_1(Z(G^p\gamma_3(G))\cap Z_3(G))$. First note that the map 
\begin{align*}
\tau_1: \Omega_1&(Z(G^p\gamma_3(G))\cap Z_3(G))^2\rightarrow \Omega_1(Z(G))^2\\
(a, b)\mapsto &\big([b, x, x][y, a, x][y, x, a],\ [b, x, y][y, a, y][y, x, b]\big)
\end{align*}
\noindent is a homomorphism by Lemma \ref{L:4.4}, and let $(a, b)\in \Ker(\tau_1)$. Since $F$ is a free group, the map $x\mapsto a, y\mapsto b$ extends to a derivation $\delta_{1 a, b}$ of $F$, and by Lemma \ref{L:4.5} $(ii)$ and $(iv)$, we check that $\delta_{1 a, b}$ preserves the relations in \eqref{eq:5.1.3}. This implies that $\delta_{1a, b}$ induces a unique derivation from $G/G^p\gamma_3(G)\rightarrow \Omega_1(Z(G^p\gamma_3(G))\cap Z_3(G))$ by Lemma \ref{L:2.1} $(ii)$. Furthermore, since $d(G)=2$, we have $C_G(G^p\gamma_3(G))=Z(G^p\gamma_3(G))$ by \eqref{eq:3.11.4}, and thus applying \eqref{eq:3.4.1} with $N=G^p\gamma_3(G)$ yields that
\begin{equation}\label{eq:5.2.1}
\bigg|\frac{Z(G^p\gamma_3(G))\cap Z_4(G)}{Z(G)}\bigg|\ge |\Ker(\tau_1)|.
\end{equation}
\noindent On the other hand, since $|\Omega_1(Z(G^p\gamma_3(G))\cap Z_3(G))|\ge p^4$ by \eqref{eq:3.11.6}, and $|\im(\tau_1)|\le p^2$, we obtain
\begin{equation*}
|\ker(\tau_1)|= \frac{|\Omega_1(Z(G^p\gamma_3(G))\cap Z_3(G))|^2}{|\im(\tau_1)|}\ge p^6.
\end{equation*}
\noindent Now using \eqref{eq:5.2.1}, we obtain that $G$ admits a non-inner automorphism of order $p$ fixing $G^p\gamma_3(G)$ elementwise whenever $\bigg|\frac{Z(G^p\gamma_3(G))\cap Z_4(G)}{Z(G)}\bigg|\le p^5$.

\item [$(ii)$] We now assume that $\bigg|\frac{Z(G^p\gamma_3(G))\cap Z_4(G)}{Z(G)}\bigg|\ge p^6$ by $(i)$, and since $Z_4(G)\le Z_p(G)$, as explained in Remark \ref{R:3.10}, this implies that
\begin{equation}\label{eq:5.2.2}
|\Omega_1(Z(G^p\gamma_3(G))\cap Z_4(G))|\ge p^6.
\end{equation}
In order to prove $(ii)$, consider the map 
\begin{align*}
\tau_2: \Omega_1(&Z(G^p\gamma_3(G))\cap Z_4(G))^2\rightarrow \Omega_1(Z_2(G))^2\\
(a, b)\mapsto &\big([b, x, x][y, a, x] [y, x, b, x] [y, x, a, x] [y, x, a],\\
& [b, x, y][y, a, y] [y, x, b, y] [y, x, a, y] [y, x, b]\big).
\end{align*}
\noindent We have that $\tau_2$ is a homomorphism by Lemma \ref{L:4.4}. As in the proof of $(i)$, we obtain that every $(a, b)\in \Ker(\tau_2)$ determines a unique derivation from $G/G^p\gamma_3(G)\rightarrow \Omega_1(Z(G^p\gamma_3(G))\cap Z_4(G))$. Furthermore, since $d(G)=2$, we have $C_G(G^p\gamma_3(G))= Z(G^p\gamma_3(G))$ by \eqref{eq:3.11.4}, and hence applying \eqref{eq:3.4.1} with $N=G^p\gamma_3(G)$ yields that
\begin{equation}\label{eq:5.2.3}
\bigg|\frac{Z(G^p\gamma_3(G))\cap Z_5(G)}{Z(G)}\bigg|\ge |\Ker(\tau_2)|.
\end{equation}
\noindent On the other hand, since $|\im(\tau_2)|\le p^4$, using \eqref{eq:5.2.2} we get
\begin{align*}
|\Ker(\tau_2)|= \frac{|\Omega_1(Z(G^p\gamma_3(G))\cap Z_4(G))|^2}{|\im(\tau_2)|}\ge p^8.
\end{align*}
Thus \eqref{eq:5.2.3} yields that $G$ admits a non-inner automorphism of order $p$ leaving $G^p\gamma_3(G)$ elemnetwise fixed, whenever $\big|\frac{Z(G^p\gamma_3(G))\cap Z_5(G)}{Z(G)}\big|\le p^7$.
\end{itemize}
\end{proof}

The theorem below appears in \cite{ASZ}.
\begin{theorem}\label{th:5.3}(\cite[Theorem 2.4 and Theorem 2.5]{ASZ})
Let $G$ be a finite $p$-group and let $N$ and $M$ be normal subgroups of $G$. Then, for all $r, l\ge 0$, we have
\begin{itemize}
\item [$(i)$] $[N^{p^r}, M] \equiv [N, M]^{p^r}\ \bigg(\mod \prod_{k=1}^r [M,\ _{p^k}\ N]^{p^{r-k}}\bigg)$. 
\item [$(ii)$] $[N^{p^r},\ _l\ G] \equiv [N,\ _l\ G]^{p^r}\ \bigg(\mod \prod_{k=1}^r [N,\ _{p^k+l-1}\ G]^{p^{r-k}}\bigg)$.
\end{itemize}
\end{theorem}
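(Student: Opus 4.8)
Since this statement is quoted from \cite{ASZ}, the intended justification is a citation; I nonetheless sketch the route I would take. Both congruences rest on commutator collection (the Hall collection process, equivalently the Hall--Petrescu formula). Because $[N^{p^r}, M]$ is the normal closure of the elements $[n^{p^r}, m]$ with $n\in N$, $m\in M$, I would first pass to this generator-wise form and show that each $[n^{p^r}, m]$ lies in $[N, M]^{p^r}$ times the asserted modulus; the reduction to the full subgroups $[N^{p^r}, M]$ and $[N, M]^{p^r}$ is then routine, since everything in sight is normal and the statement is one of equalities of normal closures rather than of individual elements. Part $(i)$ is the engine, and part $(ii)$ will be deduced from it.

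For $(i)$ I would collect $[n^{p^r}, m]=\prod_{j=0}^{p^r-1}[n,m]^{n^{j}}$ from the left. In the free group on $\{n,m\}$ this produces
\[
[n^{p^r}, m]\equiv [n,m]^{p^r}\ \prod_{i=2}^{p^r} c_i^{\binom{p^r}{i}},
\]
where $c_i$ is a left-normed commutator with one entry from $M$ and $i$ entries from $N$, so that $c_i\in [M,\ _{i}\ N]$, the congruence being modulo commutators of still higher weight in $N$ (which sit even deeper in the lower central filtration and are absorbed below). The arithmetic input is Kummer's theorem: $\nu_p\binom{p^r}{i}=r-\nu_p(i)$, where $\nu_p$ denotes the $p$-adic valuation. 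Writing $k=\nu_p(i)$ one has $i\ge p^{k}$, hence $c_i\in[M,\ _{i}\ N]\le [M,\ _{p^{k}}\ N]$, while $p^{\,r-k}\mid\binom{p^r}{i}$; therefore the $i$-th factor lies in $[M,\ _{p^{k}}\ N]^{p^{\,r-k}}$. The factors with $p\nmid i$ (so $k=0$) fall into the principal term $[M,N]^{p^r}=[N,M]^{p^r}$, while those with $k\ge 1$ distribute over $\prod_{k=1}^{r}[M,\ _{p^{k}}\ N]^{p^{\,r-k}}$, which is precisely the asserted modulus; the top term $i=p^r$ has coefficient $\binom{p^r}{p^r}=1=p^{0}$ and lands in $[M,\ _{p^{r}}\ N]$. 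This proves $(i)$.

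For $(ii)$ I would induct on $l$, the case $l=1$ being $(i)$ with $M=G$ after identifying the resulting modulus. In the step one writes $[N^{p^r},\ _l\ G]=[[N^{p^r},\ _{l-1}\ G],G]$, substitutes the inductive congruence for $[N^{p^r},\ _{l-1}\ G]$, and commutes once more with $G$; applying $(i)$ to the leading power term produces the main term $[N,\ _l\ G]^{p^r}$, while each inductive modulus factor, commuted with $G$ and re-fed to $(i)$ with exponent $p^{\,r-k}$, should advance from $[N,\ _{p^{k}+l-2}\ G]^{p^{\,r-k}}$ to $[N,\ _{p^{k}+l-1}\ G]^{p^{\,r-k}}$. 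The genuine difficulty, and the step I expect to be the main obstacle, is that the raw collection of $(i)$ produces iterated commutators with the entries coming from the inner subgroup on the outside, whereas the target modulus is phrased with the copies of $G$ on the outside, of type $[N,\ _{p^{k}+l-1}\ G]$. Reconciling the two requires the three-subgroups lemma (equivalently the Hall--Witt identity) to trade the order of the iterated commutators modulo deeper terms, together with the careful verification that every ``deeper'' correction generated at each stage is absorbed into the final product while the weight $p^{k}+l-1$ stays exactly right. Once this combinatorial organization is fixed, the binomial-valuation count is identical to that in $(i)$.
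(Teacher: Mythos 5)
The paper does not actually prove Theorem \ref{th:5.3}: it is imported verbatim from \cite[Theorems 2.4 and 2.5]{ASZ}, so the citation you give at the outset is the paper's entire justification and there is no internal argument to compare against. Judged as a reconstruction of the cited proof, your part $(i)$ follows the standard and essentially correct route: expand $[n^{p^r},m]=\prod_{j=0}^{p^r-1}[n,m]^{n^j}$, collect to get the exponent $\binom{p^r}{i}$ on $[n,m,\ _{i-1}\ n]\in[M,\ _i\ N]$, and use $\nu_p\binom{p^r}{i}=r-\nu_p(i)$ to place the $i$-th factor in $[M,\ _{p^k}\ N]^{p^{r-k}}$ with $k=\nu_p(i)$. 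Two caveats, though. The terms of weight at least $2$ in $[n,m]$, which you dismiss parenthetically, and the reverse inclusion $[N,M]^{p^r}\le [N^{p^r},M]\cdot W$ (the congruence is an equality of subgroups modulo the modulus $W$, and $[N,M]^{p^r}$ is generated by $p^r$-th powers of arbitrary products of commutators, not merely of single commutators $[n,m]$) both require a further Hall--Petrescu absorption, using $[N,M]\le N\cap M$ and bounds of the type $\gamma_i([M,N])\le [M,\ _{2i-1}\ N]$; calling this step ``routine'' understates it, although the same valuation count does close it.

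The genuine gap sits exactly where you flag it, in the inductive step of $(ii)$, and it is worth being precise about why the naive version fails. When you re-feed a modulus factor $X^{p^{r-k}}$ with $X=[N,\ _{p^k+l-2}\ G]$ into $(i)$, the secondary corrections are of the form $[G,\ _{p^j}\ X]^{p^{r-k-j}}$. Counting only the single $N$-entry, as in your bound $c_i\in[M,\ _i\ N]$, gives $[G,\ _{p^j}\ X]\le [N,\ _{p^k+l-2+p^j}\ G]$, and the inequality $p^k+p^j-1\ge p^{k+j}$ is false for all $j,k\ge 1$, so this factor does not fit into $[N,\ _{p^{k+j}+l-1}\ G]^{p^{r-k-j}}$; no other single factor of the modulus can receive it either, since a smaller index $k'$ demands a $p^{r-k'}$-th power that is not available. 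The missing ingredient is not the Hall--Witt reordering you propose but a weight count in which every entry from $X$ carries its full weight: since $X\le\gamma_{p^k+l-1}(G)$ is normal, the standard lemma $[[N,\ _a\ G],\gamma_b(G)]\le [N,\ _{a+b}\ G]$ (itself proved by a three-subgroups induction, so your instinct was not entirely misplaced) yields $[G,\ _t\ X]\le [N,\ _{t(p^k+l-1)}\ G]$, and $p^j(p^k+l-1)\ge p^{k+j}+l-1$ holds for all $l\ge 1$ because it reduces to $(p^j-1)(l-1)\ge 0$, which closes the induction. So your outline correctly identifies the obstacle but leaves it open, and as written the inductive step of $(ii)$ does not go through without this full-weight counting lemma.
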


In the next theorem, we prove that every finite nonabelian $p$-group of coclass $4$ and $5$ admits a non-inner automorphism of order $p$ for $p\ge 5$. Let us recall an elementary fact that if $G$ is a finite $p$-group of class $c$ and of coclass $r$, and if $|Z_i(G)|=p^{i+r-1}$ for an $i\in\{1, \ldots, c-1\}$, then $|Z_j(G)|=p^{j+r-1}$ and $G/Z_j(G)$ is a group of maximal class for all $j\in\{i, \ldots, c-1\}$. 

\begin{theorem}\label{th:5.4}
Let $p\ge 5$ and let $G$ be a finite nonabelian $p$-group.
\begin{itemize}
\item [$(i)$] If $G$ is of coclass $4$, then $G$ admits a non-inner automorphism of order $p$ fixing $G^p\gamma_3(G)$ elementwise.
\item [$(ii)$] If $G$ is of coclass $5$, then $G$ admits a non-inner automorphism of order $p$ fixing $G^p\gamma_4(G)$ elementwise.
\end{itemize}
\end{theorem}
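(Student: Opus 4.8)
The plan is to reduce Theorem~\ref{th:5.4} to the two preceding theorems, Theorem~\ref{th:5.1} and Theorem~\ref{th:5.2}, by exploiting the uniform bound $|Z_i(G)|\le p^{i+r-1}$ recorded at the start of the section. First I would dispatch part~$(i)$ directly. If $G$ has coclass $4$, then $|Z_i(G)|\le p^{i+3}\le p^{i+4}$ for all $i$, so in particular the hypothesis $|Z_i(G)|\le p^{i+4}$ of Theorems~\ref{th:5.1} and~\ref{th:5.2} is met. Arguing by contradiction, suppose every automorphism of order $p$ fixing $G^p\gamma_3(G)$ elementwise is inner. Then by Theorem~\ref{th:5.1} the center $Z(G)$ is cyclic, $d(G)=2$, and $|\Omega_1(Z_2(G))|\le p^2$, and by Theorem~\ref{th:5.2}$(i)$–$(ii)$ we must have both $|Z(G^p\gamma_3(G))\cap Z_4(G)/Z(G)|\ge p^6$ and $|Z(G^p\gamma_3(G))\cap Z_5(G)/Z(G)|\ge p^8$. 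Since $Z(G)$ is cyclic the latter forces $|Z_5(G)|\ge p^9$, but with coclass $4$ one has $|Z_5(G)|\le p^{5+3}=p^8$, a contradiction. Hence the desired non-inner automorphism exists.

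For part~$(ii)$, where $G$ has coclass $5$, the bound is $|Z_i(G)|\le p^{i+4}$, which is exactly the standing hypothesis of Theorems~\ref{th:5.1} and~\ref{th:5.2}, so those results apply to $G$ as stated but now deliver automorphisms fixing $G^p\gamma_3(G)$, not $G^p\gamma_4(G)$. The subtlety is the weaker conclusion requested in~$(ii)$: we only need to fix $G^p\gamma_4(G)$ elementwise, which is a \emph{smaller} subgroup. My expectation is that the coclass-$5$ case is handled by passing to a quotient or by re-running the derivation-counting argument one level higher. Concretely, I would either apply the coclass-$4$ result of part~$(i)$ to a suitable quotient $G/L$ where $L$ is a minimal central subgroup (so that $G/L$ has coclass $4$, and a non-inner automorphism of $G/L$ fixing $(G/L)^p\gamma_3(G/L)$ lifts to one of $G$ fixing $G^p\gamma_4(G)$), or else repeat the argument of Theorem~\ref{th:5.2} with $\gamma_4$ in place of $\gamma_3$, constructing derivations on $G/G^p\gamma_4(G)$ valued in $\Omega_1(Z(G^p\gamma_4(G))\cap Z_i(G))$ for the relevant $i$ and counting kernels against the bound $|Z_i(G)|\le p^{i+4}$.

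The main obstacle I anticipate is controlling the structure of $G/G^p\gamma_4(G)$ in the coclass-$5$ case with the same precision that $G/G^p\gamma_3(G)$ enjoyed in Theorem~\ref{th:5.1}$(iii)$, where it was pinned down to an extra-special group of order $p^3$ and exponent $p$ with presentation~\eqref{eq:5.1.3}. When we move to $\gamma_4$ the quotient can have class $3$ rather than class $2$, so the clean two-generator presentation is no longer available and the derivation identities of Lemma~\ref{L:4.5}$(i)$–$(v)$ must be applied at weight-$4$ commutators, introducing additional terms in the homomorphisms $\tau$ whose images must be bounded. The elementary maximal-class fact recalled just before the theorem---that $|Z_i(G)|=p^{i+r-1}$ for some $i$ forces $G/Z_j(G)$ to be of maximal class for all $j\ge i$---is the tool I would use to squeeze these extra dimensions, since it converts a single tight center bound into tight bounds at every higher level, and it is precisely this propagation that should close the gap between the $G^p\gamma_3(G)$-conclusions available from the earlier theorems and the $G^p\gamma_4(G)$-conclusion demanded in~$(ii)$.
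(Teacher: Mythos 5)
Your part $(i)$ is correct and is essentially the paper's own argument: coclass $4$ gives $|Z_5(G)|\le p^8$, which is incompatible with the lower bound $\bigl|\frac{Z(G^p\gamma_3(G))\cap Z_5(G)}{Z(G)}\bigr|\ge p^8$ that failure would force via Theorem \ref{th:5.2} $(ii)$, so that theorem applies directly.

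Part $(ii)$, however, has a genuine gap, and neither of your two proposed routes closes it. The first route --- pass to $G/L$ for a minimal central subgroup $L$, apply part $(i)$, and lift --- fails at the decisive step: an automorphism of a quotient $G/L$ does not in general lift to an automorphism of $G$, and no mechanism is offered to produce such a lift; if the conjecture reduced to central quotients in this way it would follow by trivial induction on $|G|$. (Note also that $G/L$ need not have coclass $4$: if the nilpotency class drops in $G/L$, the coclass remains $5$.) The second route --- rerunning the derivation count on $G/G^p\gamma_4(G)$ --- stalls exactly at the obstacle you yourself identify: that quotient has no controlled presentation, and nothing in Theorems \ref{th:5.1} and \ref{th:5.2} bounds $\bigl|\frac{Z(G^p\gamma_4(G))\cap Z_i(G)}{Z(G)}\bigr|$ in the inner case, so the counting cannot even be set up.

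What the paper actually does is different from both suggestions and is the substance of the proof. Assuming no such automorphism exists, Theorem \ref{th:5.2} $(ii)$ together with $|Z_5(G)|\le p^9$ forces $|Z_5(G)|=p^9$, $|Z(G)|=p$, and $Z_5(G)\le Z(G^p\gamma_3(G))$; one then constructs a specific normal subgroup $K$ with $Z_4(G)\le K\le G^p\gamma_3(G)$ such that $G/K$ has maximal class and order $p^4$: namely $K=\gamma_4(G)Z_5(G)$ when $[G:Z_5(G)]\ge p^4$, while in the remaining case $|G|=p^{12}$, $G^p\gamma_3(G)$ is abelian of class considerations forcing the use of Theorem \ref{th:5.3} to prove $G^p\le Z_3(G)$ before choosing $K$ of index $p$ in $G^p\gamma_3(G)$. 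Since for $p\ge 5$ there is a unique group of maximal class and order $p^4$, $G/K$ has the explicit presentation \eqref{eq:5.3.2}, and derivations $G/K\rightarrow \Omega_1(Z_4(G))$ can be counted via the homomorphism $\tau_3$: in the inner case the inducing elements land in $Z_5(G)$, giving $|\Ker(\tau_3)|\le \bigl|\frac{Z_5(G)}{Z(G)}\bigr|=p^8$, contradicting the lower bound $|\Ker(\tau_3)|\ge p^9$ obtained from $|\Omega_1(Z_4(G))|\ge p^6$ and $|\im(\tau_3)|\le p^3$. Two verifications your sketch omits entirely, and which are essential to the statement being proved, are that $G^p\le K$ (via regularity of $G/K$, which has class $3$ and $p\ge 5$, together with $x^p, y^p\in K$ from \eqref{eq:5.3.2}) and $\gamma_4(G)\le K$, so that the resulting order-$p$ automorphism genuinely fixes $G^p\gamma_4(G)$ elementwise.
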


\begin{proof}
\begin{itemize}
\item [$(i)$] Since the coclass of $G$ is $4$, we have $|Z_5(G)|\le p^8$, and thus Theorem \ref{th:5.2} $(ii)$ yields $(i)$. 
\item [$(ii)$] As in the proof of Theorem \ref{th:5.2}, we assume that Hypothesis $B$ holds for $G$, $Z(G)$ is cyclic, $d(G)=2$, $|\Omega_1(Z_2(G))|=p^2$, and $G/G^p\gamma_3(G)$ is an extra-special group of order $p^3$. Furthermore, by Theorem \ref{th:5.2} $(ii)$, $\bigg|\frac{Z(G^p\gamma_3(G))\cap Z_5(G)}{Z(G)}\bigg|\ge p^8$. Since the coclass of $G$ is $5$, we have $|Z_5(G)|\le p^9$, and this implies that $|Z_5(G)|=p^9$, $|Z(G)|=p$, and 
\begin{equation}\label{eq:5.3.1} 
Z_5(G)\le Z(G^p\gamma_3(G)).
\end{equation}
\noindent Moreover, $|Z_5(G)|=p^9$ implies that $G/Z_5(G)$ is of maximal class. Now we proceed to show the existence of $K\unlhd G$ such that $Z_4(G)\le K\le G^p\gamma_3(G)$ and $G/K$ is a group of maximal class and of order $p^4$. If $[G:Z_5(G)]\ge p^4$, then we have that $G/\gamma_4(G)Z_5(G)$ is a group of maximal class and of order $p^4$, and $Z_4(G)\le \gamma_4(G)Z_5(G)\le G^p\gamma_3(G)$ holds by \eqref{eq:5.3.1}. Now let $[G:Z_5(G)]\le p^3$. In this case, since $[G:G^p\gamma_3(G)]=p^3$, using \eqref{eq:5.3.1} we get $Z_5(G)=Z(G^p\gamma_3(G))=G^p\gamma_3(G)$, and $G^p\gamma_3(G)$ is abelian. Furthermore, since $|Z_5(G)|=p^9$, we obtain that $|G|=p^{12}$ and the class of $G$ is $7$. Note that, as the class of $G$ is $7$, $Z_4(G)\lneq Z_5(G)=G^p\gamma_3(G)$. Hence, there exists $Z_4(G)\le K\le G^p\gamma_3(G)$ with $[G^p\gamma_3(G):K]=p$. Now let us note that $G^p\le Z_3(G)\le K$. Since $\gamma_8(G)=1$, by Theorem \ref{th:5.3} $(ii)$ we obtain $[G^p,\ _3\ G]=\gamma_4(G)^p$, and taking $N=G$, $M=\gamma_3(G)$ in Theorem \ref{th:5.3} $(i)$ yields that $[G^p,\gamma_3(G)]=\gamma_4(G)^p$. We have $[G^p, \gamma_3(G)]=1$ as $G^p\gamma_3(G)$ is abelian, and hence we get $G^p\le Z_3(G)$. Moreover, $\gamma_8(G)=1$ implies that $\gamma_4(G)\le Z_4(G)\le K$. Thus $[G^p\gamma_3(G), G]\le G^p\gamma_4(G)\le K$ yielding $K\unlhd G$, and $|G/K|=p^4$. Furthermore, $G^p\le K$ implies that $\gamma_3(G)\not\le K$, and hence $G/K$ is of maximal class. This proves the existence of $K$ in either case as required. Noting that $Z_4(G)\le Z(G^p\gamma_3(G))$ by \eqref{eq:5.3.1}, $Z_4(G)\le K\le G^p\gamma_3(G)$ implies $Z_4(G)\le Z(K)$, and thus $Z_4(G)$ is a $G/K$-module. Now we proceed to give a family of derivations from $G/K\rightarrow \Omega_1(Z_4(G))$. The isomorphism class given by isomorphism type $12$ in Huppert's classification of finite $p$-groups of order $p^4$ \cite[Chapter 3 p.\ 346]{BH} is the only isomorphism class for finite $p$-groups of maximal class and order $p^4$, $p\ge 5$. Hence $G/K$ has a presentation 
\begin{equation}\label{eq:5.3.2}
\langle x, y\ |\ x^p, y^p, [y, x, y], [y, x, x, y], [y, x, x, x]\rangle.
\end{equation}
\noindent Let $F$ be a free group on $\{x, y\}$. Then $\Omega_1(Z_4(G))$ is an $F$-module. By Lemma \ref{L:4.4}, the map 
\begin{align*}
\tau_3: \Omega_1(&Z_4(G))^2\rightarrow \Omega_1(Z_2(G))\times \Omega_1(Z(G))^2\\
(a, b)\mapsto &\big([b, x, y][y, a, y][y, x, b, y][y, x, a, y][y, x, b],\ [\nu(a, b), y],\ [\nu(a, b), x]\big)
\end{align*}
is a homomorphism, where $\nu(a, b)= [b, x, x][y, a, x][y, x, b, x][y, x, a, x][y, x, a]$. Let $(a, b)\in \Ker(\tau_3)$. Since $F$ is a free group, the map $x\mapsto a$, $y\mapsto b$ extends to a derivation $\delta_{3_{a, b}}$ of $F$. Now we proceed to check that $\delta_{3 a, b}$ preserves the relations in \eqref{eq:5.3.2}. Using Lemma \ref{L:4.5} $(ii)$, we get $\delta_{3 a, b}([y, x, y])= [b, x, y][y, a, y][y, x, b, y][y, x, a, y][y, x, b]=1$. Similarly $\delta_{3 a, b}([y, x, x])= \nu(a, b)$, and so, by Lemma \ref{L:4.5} $(iii)$, we obtain $\delta_{3 a, b}([y, x, x, y])= [\nu(a, b), y]=1$ and $\delta_{3 a, b}([y, x, x, x])= [\nu(a, b), x]=1$. By Lemma \ref{L:4.5} $(iv)$ we get $\delta_{3 a, b}(x^p)= \delta_{3 a, b}(y^p)=1$. Hence $\delta_{3 a, b}$ induces a derivation on $G/K$ by Lemma \ref{L:2.1} $(ii)$, which we again denote with $\delta_{3 a, b}$. Let $\alpha_{3 a, b}=\varphi(\delta_{3 a, b})\in C_{\Aut(G)}(G/K, K)$. It follows that $\alpha_{3 a, b}$ has order $p$, and fixes $K$ and $G/\Omega_1(Z_4(G))$ elmentwise. Since the class of $G/K$ is $3$, $G/K$ is a regular group, and since $x^p, y^p\in K$ by \eqref{eq:5.3.2}, we obtain $G^p\le K$. Furthermore, $\gamma_4(G)\le K$ as $|G/K|=p^4$, and thus $\alpha_{3 a, b}$ fixes $G^p\gamma_4(G)\le K$ elementwise. Suppose $\alpha_{3 a, b}=i_{u_{3 a, b}}$ is an inner automorphism of $G$, then $\alpha_{3 a, b}|_{G/\Omega_1(Z_4(G))}=id$ implies that $u_{3 a, b}\in Z_5(G)$. Hence if $\alpha_{3 a, b}$ is an inner automorphism of $G$ for all $(a, b)\in \Ker(\tau_3)$, then we obtain
\begin{equation}\label{eq:5.3.3}
|\Ker(\tau_3)|\le \bigg|\frac{Z_5(G)}{Z(G)}\bigg|=p^8.
\end{equation}
Now we conclude the proof by contradicting \eqref{eq:5.3.3}. By \eqref{eq:5.3.1}, $Z_4(G)\le Z(G^p\gamma_3(G))$, and hence using Theorem \ref{th:5.2} $(i)$ we obtain $\big|\frac{Z_4(G)}{Z(G)}\big|\ge p^6$. As explained in the Remark \ref{R:3.10}, this implies that
\begin{equation}\label{eq:5.3.4}
|\Omega_1(Z_4(G))|\ge p^6.
\end{equation}
\noindent Now we find an upper bound for $|\im(\tau_3)|$. Let us consider the map 
\begin{align*}
\mu: \Omega_1(&Z_2(G))\rightarrow \Omega_1(Z(G))^2\\
w\mapsto& ([w, y], [w, x]).
\end{align*}
We have that $\mu$ is a homomorphism by Lemma \ref{L:4.4}. Noting that $\Ker(\mu)=\Omega_1(Z(G))$, and since $|\Omega_1(Z_2(G))|=p^2$, we get $|\im(\mu)|=p$. Since $\nu(a, b)\in \Omega_1(Z_2(G))$ for all $a, b\in \Omega_1(Z_4(G))$, we obtain that $\im(\tau_3)\le \Omega_1(Z_2(G))\times \im(\mu)$. Thus $|\im(\tau_3)|\le |\Omega_1(Z_2(G))| |\im(\mu)|=p^3$, and using \eqref{eq:5.3.4} we get
\begin{equation*}
|\Ker(\tau_3)|\ge \frac{|\Omega_1(Z_4(G))|^2}{|\im(\tau_3)|}\ge p^9.
\end{equation*}
\end{itemize}
\end{proof}

\section*{Acknowledgements}
I would like to thank my advisor Viji Z Thomas for the careful reading of the manuscript and his valuable suggestions and constant support.

\bibliographystyle{amsplain}
\bibliography{outbibliography}
\end{document}